\newcommand{\p}{\mathfrak{p}}
\newcommand{\T}{\mathbb{T}}
\newcommand{\C}{\mathbb{C}}
\newcommand{\isom}{\cong}
\newcommand{\tensor}{\otimes}
\newcommand{\ncisom}{\approx}   % noncanonical isomorphism
\newcommand{\vphi}{\varphi}
\newcommand{\cO}{\mathcal{O}}
\newcommand{\Z}{\mathbb{Z}}
\newcommand{\QQ}{\mathbb{Q}}
\renewcommand{\P}{\mathbb{P}}
\newcommand{\bN}{\mathbf{N}}
\newcommand{\F}{\mathbb{F}}
\newcommand{\m}{\mathfrak{m}}
\DeclareMathOperator{\Tr}{Tr}
\DeclareMathOperator{\tor}{tor}  
\DeclareMathOperator{\disc}{disc}
\DeclareMathOperator{\Norm}{Norm}
\DeclareMathOperator{\Disc}{Disc}
\DeclareMathOperator{\Gal}{Gal}
  \newcommand{\textcyr}[1]{%
    {\fontencoding{OT2}\fontfamily{wncyr}\fontseries{m}\fontshape{n}%
     \selectfont #1}}
\newcommand{\Sha}{{\mbox{\textcyr{Sh}}}}
\theoremstyle{plain}
\newtheorem{theorem}{Theorem}[section]
\newtheorem{proposition}[theorem]{Proposition}
\newtheorem{conjecture}[theorem]{Conjecture}
\newtheorem{remark}[theorem]{Remark}
\newcommand{\Qbar}{\overline{\Q}}
\newcommand{\Q}{\mathbb{Q}}
\newcommand{\n}{\mathfrak{n}}
\newcommand{\ap}[1]{a_{\p_{#1}}}
\newcommand{\Ebar}{\overline{E}}
\newcommand{\fc}{\mathfrak{c}}
\newcommand{\OF}{\cO_F}
\newcommand{\dembele}{Demb\'el{\'e}\xspace}
\renewcommand{\phi}{\varphi}
\title{A Database of Elliptic Curves over $\Q(\sqrt{5})$---First Report}
\author[Bober et al.]{Jonathan Bober, Alyson Deines, Ariah Klages-Mundt, Benjamin
  LeVeque, R. Andrew Ohana, Ashwath Rabindranath, Paul Sharaba, William
  Stein}
\thanks{This work is supported by NSF grant DMS-0757627, administered
by the American Institute of Mathematics. }
\begin{document}
\begin{abstract} 
  We describe a tabulation of (conjecturally) modular elliptic curves
  over the field $\Q(\sqrt5)$ up to the first elliptic curve of rank $2$. Using
  an efficient implementation of an algorithm of Lassina \dembele
  \cite{dembele:hilbert5}, we computed tables of Hilbert modular
  forms of weight $(2,2)$ over $\Q(\sqrt 5)$, and via a variety of
  methods we constructed corresponding elliptic curves, including
  (again, conjecturally) all elliptic curves over $\Q(\sqrt5)$ that
  have conductor with norm less than or equal to 1831.
\end{abstract} 

%We describe a tabulation of (conjecturally) modular elliptic curves over the field Q(sqrt(5)) up to the first curve of rank 2. Using an efficient implementation of an algorithm of Lassina Dembele, we computed tables of Hilbert modular forms of weight (2,2) over Q(sqrt(5)), and via a variety of methods we constructed corresponding elliptic curves, including (again, conjecturally) all elliptic curves over Q(sqrt(5)) that have conductor with norm less than or equal to 1831.

\maketitle

\section{Introduction}\label{sec:intro}

\subsection{Elliptic Curves over $\Q$}
Tables of elliptic curves over $\Q$ have been of great value in
mathematical research.  Some of the first such tables were those in
Antwerp IV \cite{antwerpiv}, which included all elliptic curves over
$\Q$ of conductor up to $200$, and also a table of all elliptic curves
with bad reduction only at $2$ and $3$.  

Cremona's book \cite{cremona:algs} gives a detailed description of
algorithms that together output a list of all elliptic curves over
$\Q$ of any given conductor, along with extensive data about each
curve.  The proof that his algorithm outputs {\em all} curves of given
conductor had to wait for the proof of the full modularity theorem in
\cite{breuil-conrad-diamond-taylor}.  Cremona has subsequently
computed tables \cite{cremona:onlinetables} of all elliptic curves
over $\Q$ of conductor up to $220,\!000$, including Mordell-Weil
groups and other extensive data about each curve; he expects to soon
reach his current target, conductor $234,\!446$, which is the smallest
known conductor of a rank $4$ curve.

In a different direction, Stein-Watkins (see \cite{stein-watkins:ants5, bmsw:bulletins}) 
created a table of 136,832,795 elliptic curves over $\Q$ of conductor $\leq 10^8$, and a
table of 11,378,911 elliptic curves over $\Q$ of prime conductor $\leq
10^{10}$. 
% For each curve, these tables contain a numerical
%approximation of $\ord_{s=1} L(E,s)$, and some other data, though the
%actual ranks of every one of these curves has not been determined.
There are many curves of large discriminant missing from the
Stein-Watkins tables, since these tables are made by enumerating
curves with relatively small defining equations, and discarding those
of large conductor, rather than systematically finding all curves of
given conductor no matter how large the defining equation.

\subsection{Why $\Q(\sqrt{5})$?}

Like $\Q$, the field $F=\Q(\sqrt{5})$ is a totally real field, and
many of the theorems and ideas about elliptic curves over $\Q$ have
been generalized to totally real fields. As is the case over $\Q$,
there is a notion of modularity of elliptic curves over $F$, and work
of Zhang \cite{zhang:heightsshimura} has extended many results of
Gross-Zagier \cite{gross-zagier} and Kolyvagin
\cite{kolyvagin:mordellweil} to the context of elliptic curves over
totally real fields.

If we order totally real number fields $K$ by the absolute value of
their discriminant, then $F=\Q(\sqrt{5})$ comes next after $\Q$ (the
Minkowski bound implies that $|D_K| \geq (n^n/n!)^2$, where
$n=[K:\Q]$, so if $n\geq 3$ then $|D_K|>20$). That $5$ divides
$\disc(F)=5$ thwarts attempts to easily generalize the
method of Taylor-Wiles to elliptic curves over $F$, which makes
$\Q(\sqrt{5})$ even more interesting. Furthermore $F$ is a PID and
elliptic curves over $F$ admit global minimal models and have well-defined 
notions of minimal discriminants.
The field $F$ also has $31$ CM $j$-invariants, which is far more than
 any other quadratic field (see Section~\ref{sec:cm}).  Letting
 $\vphi=\frac{1+\sqrt{5}}{2}$, we have that the group of units
 $\{\pm 1\} \times \langle \vphi \rangle$ of the
ring $R=\cO_F=\Z[\vphi]$ of integers of $F$ is infinite, leading to
additional complications.  Finally, $F$ has even degree, which
makes certain computations more difficult, as the cohomological
techniques of \cite{greenberg-voight:shimura} are not available.

\subsection{Modularity conjecture}\label{sec:mod}
The following conjecture is open:
\begin{conjecture}[Modularity]\label{conj:mod}
  The set of $L$-functions of elliptic curves over $F$ equals the set
  of $L$-functions associated to cuspidal Hilbert modular newforms
  over $F$ of weight $(2,2)$ with rational Hecke eigenvalues.
\end{conjecture}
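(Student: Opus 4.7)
The plan is to establish the two inclusions of Conjecture~\ref{conj:mod} separately. For the direction from Hilbert modular newforms to elliptic curves, start with a cuspidal newform $f$ of weight $(2,2)$ on $\mathrm{GL}_2/F$ with rational Hecke eigenvalues. Because $[F:\Q]=2$ is even, there is no one-dimensional Shimura variety attached directly to $\mathrm{GL}_2/F$, so I would first invoke the Jacquet--Langlands correspondence to transfer $f$ to an automorphic form on the multiplicative group of a quaternion algebra $B/F$ ramified at exactly one infinite place (together with whatever finite places the level of $f$ forces). The Jacobian $J_B$ of the associated Shimura curve $X_B$ carries a faithful Hecke action, and the $f$-isotypic quotient $A_f$ of $J_B$ is an abelian variety over $F$; rationality of the Hecke eigenvalues should force $\dim A_f=1$, producing the desired elliptic curve $E_f/F$. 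Equality of $L$-functions would then follow from the Eichler--Shimura congruence on $X_B$ together with the compatibility of Jacquet--Langlands with local $L$- and $\varepsilon$-factors.

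The reverse direction---modularity of every $E/F$---I would attack by the Taylor--Wiles--Kisin method in its Hilbert modular incarnation. For a prime $p$ of good or multiplicative reduction, the Galois representation $\rho_{E,p}\colon G_F\to\mathrm{GL}_2(\Z_p)$ is geometric, and the task is to match it to a Hilbert eigenform by (i) proving residual modularity of $\bar\rho_{E,p}$ for some small $p$, and (ii) applying a modularity lifting theorem. The canonical scheme uses Langlands--Tunnell at $p=3$ plus a $3$--$5$ switch when $\bar\rho_{E,3}$ is reducible, all after a suitable totally real solvable base change $F'/F$ chosen so that Skinner--Wiles-type lifting theorems become available. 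The finitely many exceptional pairs $(\bar\rho_{E,3},\bar\rho_{E,5})$ obstructing this scheme cut out $F$-rational points on a handful of explicit modular curves, which can be analyzed by descent and, in the small-conductor range, by direct consultation of the tables produced in this paper.

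The main obstacle is exactly the fact, emphasized in Section~\ref{sec:intro}, that $5$ ramifies in $F$. At the unique prime $\p\mid 5$ of $\OF$ the absolute ramification index is $2$, which pushes the Hodge--Tate weights of $\rho_{E,5}|_{G_{F_\p}}$ outside the Fontaine--Laffaille range, so the off-the-shelf $R=\T$ theorems used in the case $F=\Q$ do not apply at $p=5$ over our $F$. Bypassing this requires either a stronger integral $p$-adic Hodge theory input (for instance potentially diagonalizable deformation rings in the sense of Barnet-Lamb--Gee--Geraghty) or an alternative switch such as $3$--$7$, coupled with a direct verification of residual modularity at $p=7$ via companion-form or base-change arguments. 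I expect this local input at the ramified prime $5$ to be the principal difficulty; once it is in place, the remainder of the argument should proceed along standard lines.
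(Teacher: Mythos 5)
This statement is labeled as a \emph{conjecture}, and the paper does not prove it: Section~\ref{sec:mod} explicitly says ``The following conjecture is open'' and then announces ``We assume Conjecture~\ref{conj:mod} for the rest of this paper.'' So there is no proof in the paper for your proposal to be compared against, and any purported ``proof'' should be read against that backdrop.

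Your sketch is a reasonable high-level research program, and it correctly reproduces the two standard ingredients (Jacquet--Langlands plus Eichler--Shimura on a Shimura curve for the ``newform $\Rightarrow$ elliptic curve'' direction, and a Taylor--Wiles--Kisin style lifting argument with a $3$--$5$ or $3$--$7$ switch for the converse). It also correctly identifies the same obstruction the paper flags in Section~\ref{sec:intro}: the prime $5$ ramifies in $F=\Q(\sqrt5)$, so the absolute ramification index at $\p\mid 5$ is $2$, the representation $\rho_{E,5}|_{G_{F_\p}}$ leaves the Fontaine--Laffaille range, and the off-the-shelf $R=\T$ theorems do not apply there. However, you explicitly leave that gap open (``I expect this local input at the ramified prime $5$ to be the principal difficulty; once it is in place, the remainder of the argument should proceed along standard lines''), and you also leave unaddressed the classification of exceptional residual pairs, the CM case in the ``form $\Rightarrow$ curve'' direction (where $A_f$ need not be an elliptic curve over $F$ without further care), and the precise verification that rationality of eigenvalues forces $\dim A_f=1$ after the transfer to a Shimura curve with an auxiliary ramified finite place. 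These are not cosmetic omissions; they are exactly the places where the argument is not yet known. As it stands, the proposal is a plausible strategy, not a proof, and the paper's authors treat the statement the same way: as an open conjecture to be assumed, not established.
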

Given the progress on modularity theorems initiated by
\cite{wiles:fermat}, we are optimistic that Conjecture~\ref{conj:mod}
will be proved.  {\em We assume Conjecture~\ref{conj:mod} 
for the rest of this paper.} \\

In Section~\ref{sec:hmf} we sketch how to compute Hilbert modular
forms using arithmetic in quaternion algebras. Section~\ref{sec:finding} 
gives numerous methods for finding an elliptic curve corresponding to
a Hilbert modular form. It should be noted that these are the methods 
{\it originally} used to make the tables -- in hindsight, it was discovered
that some of the elliptic curves found using the more specific techniques could 
be found using a better implementation of the sieved enumeration of
Section~\ref{sec:sieve}.
Section~\ref{sec:isoclass} addresses how to find all curves that are
isogenous to a given curve.  
In Section~\ref{sec:cm} we enumerate the CM $j$-invariants in $F$.
We discuss some projects for future work in
Section~\ref{sec:future}.  Finally, Section~\ref{sec:tables} contains
tables that summarize various information about our
dataset \cite{sqrt5data}.

{\bf Acknowledgements.} We would like to thank John Cremona, Noam Elkies, Tom
Fisher, Richard Taylor, John Voight, and the anonymous referee for helpful
conversations.  We would especially like to thank Joanna Gaski for
providing (via the method of Section~\ref{sec:naive}) the explicit
table of elliptic curves that kickstarted this project.
We used {\tt Sage} \cite{sage} extensively throughout this project.

\section{Computing Hilbert modular forms over $F$}\label{sec:hmf}

In Section~\ref{sec:dembele} we sketch \dembele's approach to computing Hilbert
modular forms over $F$, then in Section~\ref{sec:p1rn} we make
some remarks about our fast implementation. 

\subsection{Hilbert modular forms and quaternion 
algebras}\label{sec:dembele}
\dembele \cite{dembele:hilbert5} introduced an algebraic approach via
the Jacquet-Langlands correspondence to computing Hilbert modular
forms of weight $(2,2)$ over $F$.  The Hamiltonian quaternion algebra
$F[i,j,k]$ over $F$ is ramified exactly at the two infinite places,
and contains the maximal order
$$
 S = R\left[\frac{1}{2}(1-\overline{\vphi} i + \vphi j),\,
       \frac{1}{2}(-\overline{\vphi} i + j + \vphi k),\,
       \frac{1}{2}(\vphi i - \overline{\vphi} j + k), \,
       \frac{1}{2}(i + \vphi j - \overline{\vphi} k)\right].
$$
For any nonzero ideal $\n$ in $R=\cO_F$,
let $\P^1(R/\n)$ be the set of equivalence classes of
 column vectors with two coprime entries $a,b \in R/\n$ modulo the
 action of $(R/\n)^*$.  We use the notation $[a:b]$
to denote the equivalence class of 
$\left(\begin{smallmatrix}a\\b\end{smallmatrix}\right)$.
For each prime $\p\mid \n$, we fix a choice of isomorphism
$F[i,j,k]\tensor F_{\p} \ncisom M_2(F_{\p})$, which induces a left
action of $S^*$ on $\P^1(R/\n)$. 
The action of $T_{\p}$, for $p\nmid \n$, is
$T_{\p}([x]) = \sum [\alpha x]$, where the sum is over the classes
$[\alpha]\in S/S^*$ with $N_{\text{red}}(\alpha)=\pi_{\p}$ (reduced quaternion norm),
where $\pi_{\p}$ is a fixed choice of totally positive generator of~$\p$.
The Jacquet-Langlands correspondence implies that
the space of Hilbert modular forms of level $\n$ and weight $(2,2)$ is
noncanonically isomorphic as a module over the Hecke algebra
$$\T=\Z[\,T_\p :  \p \text{ nonzero prime ideal of }R\,]$$ 
to the finite dimensional complex vector space 
$V= \C[S^* \backslash \P^1(R/\n)]$. 

\subsection{Remarks on Computing with $\P^1(R/\n)$}\label{sec:p1rn}

In order to implement the algorithm sketched in
Section~\ref{sec:dembele}, it is critical that we can compute with
$\P^1(R/\n)$ very, very quickly.  For example, to apply the method of
Section~\ref{sec:specialvalues} below, in some cases we have to
compute tens of thousands of Hecke operators.  Thus in this section we
make some additional remarks about this fast
implementation.

When $\n=\p^e$ is a prime power, it is
straightforward to efficiently enumerate representative elements of
$\P^1(R/\p^e)$, since each element $[x:y]$ of $\P^1(R/\p^e)$ has a
unique representative of the form $[1:b]$ or $[a:1]$ with $a$
divisible by $\p$, and these are all distinct.  It is easy 
to put  any $[x:y]$ 
in this canonical form and enumerate the elements of $\P^1(R/\p^e)$, after
choosing a way to enumerate the elements of $R/\p^e$.
An enumeration of $R/\p^e$ is easy to give
once we decide on how to represent $R/\p^e$.

In general, factor $\n = \prod_{i=1}^m \p_i^{e_i}$. 
We have a bijection $\P^1(R/\n) \isom \prod_{i=1}^m
\P^1(R/\p_i^{e_i})$, which allows us to reduce to the prime power
case, at the expense of having to compute the bijection 
$R/\n \isom \prod R/\p_i^{e_i}$.
To this end, we {\em represent elements} of $R/\n$ as
$m$-tuples in $\prod R/\p_i^{e_i}$, thus making
computation of the bijection trivial.  

To minimize dynamic memory allocation, thus speeding up the
code by an order of magnitude, in the implementation
we make some arbitrary
bounds; this is not a serious constraint, since the linear algebra
needed to isolate eigenforms for levels beyond this bound is
prohibitive. We assume $m\leq 16$ and each individual 
$p_i^{e_i}\leq 2^{31}$, where $p_i$ is the residue characteristic of $\p_i$.  
In all cases, we represent an
element of $R/\p_i^{e_i}$ as a pair of  64-bit integers, and
represent an element of $R/\n$ as an array of 16 pairs of 64-bit
integers. We use this representation in all cases, even if $\n$ is
divisible by less than $16$ primes; the gain in speed coming
from avoiding dynamic memory allocation more than compensates for the
wasted memory.

Let $\p^e$ be one of the prime power factors of $\n$, and let $p$ be the residue
characteristic of $\p$. We have one of the following cases:
\begin{itemize}
\item $\p$ splits in $R$; then $R/\p\isom \Z/p\Z$ and we represent elements
of $R/\p^e$ as pairs $(a,0)$ mod $p^e$ with the usual addition and multiplication
in the first factor.
\item $\p$ is inert in $R$; then $R/\p^e\isom (\Z/p^e\Z)[x]/(x^2-x-1)$,
and we represent elements by pairs $(a,b) \in \Z/p^e\Z$ with multiplication
$$(a,b)(c,d) = (ac+bd,ad+bd+bc) \mod p^e.$$
\item $\p$ is ramified and $e=2f$ is even; this is exactly the
same as the case when $\p$ is inert but with $e$ replaced by $f$,
since $R/\p^eR \isom (\Z/p^{f}\Z)[x]/(x^2-x-1)$.
\item $\p$ is ramified (so $p=5$) and $e=2f-1$ is odd; the ring $A=R/\p^e$
  is trickier than the rest, because it is {\em not} of the form
  $\Z[x]/(m,g)$ where $m\in\Z$ and $g \in\Z[x]$.  We have $A \ncisom
  (\Z/5^f\Z)[x]/(x^2-5, 5^{f-1}x)$, and represent elements of $A$ as
  pairs $(a,b) \in (\Z/5^f)\times (\Z/5^{f-1}\Z)$, with arithmetic
  given by
\begin{align*}
(a,b) + (c,d) &= (a+c \mod 5^f,\,\,\, b+d \mod 5^{f-1})\\
(a,b)\cdot (c,d) &= (ac+5bd \mod 5^f,\,\,\, ad+bc \mod 5^{f-1}).
\end{align*}
 We find that $ \varphi \in R \mapsto (1/2,1/2)$.
\end{itemize}

\section{Strategies for finding an elliptic curve attached to a Hilbert modular form}\label{sec:finding}
In this section we describe various strategies to find an elliptic curve
 associated to each of the Hilbert modular forms computed in Section 2.
Let $f$ be a rational cuspidal Hilbert newform of weight $(2,2)$ as
in Section~\ref{sec:hmf}.  According to Conjecture~\ref{conj:mod},
there is some elliptic curve $E_f$ over $F$ such that $L(f,s) =
L(E_f,s)$.  (Note that $E_f$ is only well defined up to isogeny.)
Unlike the case for elliptic curves over $\Q$ (see \cite{cremona:algs}), 
there seems to be no known {\em efficient} direct algorithm to find $E_f$.
 Nonetheless, there are several approaches coming from various directions, 
which are each efficient in some cases.

Everywhere below, we continue to assume that Conjecture~\ref{conj:mod}
is true and assume that we have computed (as in Section~\ref{sec:hmf})
the Hecke eigenvalues $a_\p\in \Z$ of all rational Hilbert newforms of
some level $\n$, for $\Norm(\p)\leq B$ a good prime, where $B$ is
large enough to distinguish newforms. In some cases we will need far
more $a_{\p}$ in order to compute with the $L$-function attached to a
newform.  We will also need the $a_{\p}$ for bad $\p$ in a few cases,
which we obtain using the functional equation for the $L$-function (as
an application of Dokchitser's algorithm \cite{dokchitser:lfun}).

We define the {\em norm conductor} of an elliptic curve over $F$ to be
the absolute norm of the conductor ideal of the curve.

In Section~\ref{sec:naive} we give a very simple enumeration method
for finding curves, then in Section~\ref{sec:sieve} we refine it by
taking into account point counts modulo primes; together, these two
methods found a substantial fraction of our curves.
Sections~\ref{sec:torsion} and \ref{sec:congfam} describe methods for
searching in certain families of curves, e.g., curves with a torsion
point of given order or curves with a given irreducible mod $\ell$
Galois representation.  Section~\ref{sec:twisting} is about how to
find all twists of a curve with bounded norm conductor.  In
Section~\ref{sec:cremona-lingham} we mention the Cremona-Lingham
algorithm, which relies on computing all $S$-integral points on many
auxiliary curves.  Finally, Section~\ref{sec:specialvalues} explains
in detail an algorithm of \dembele{} that uses  explicit
computations with special values of $L$-functions to find curves.

\subsection{Extremely naive enumeration}\label{sec:naive}

The most naive strategy is to systematically enumerate 
 elliptic curves $E: y^2 = x^3 + ax + b$, with $a,b\in R$,
and for each $E$, to compute $a_\p(E)$ for $\p$ not dividing $\Disc(E)$
 by counting points on $E$ reduced modulo $\p$.  If all the $a_\p(E)$ match
 with those of the input newform $f$ up to the bound $B$, we then compute
 the conductor $\n_E$, and if it equals $\n$, we conclude from the sufficient 
largeness of $B$ that $E$ is in the isogeny class of $E_f$.

Under our hypotheses, this approach provides a deterministic and 
terminating algorithm to find all $E_f$. However, it can be extremely slow
 when $\n$ is small but the simplest curve in the isogeny class 
of $E_f$ has large coefficients.  For example, using this search method it would be
infeasible to find the curve \eqref{equation:fisher-curve} computed by Fisher using the
visibility of \Sha$[7]$.

\subsection{Sieved enumeration}\label{sec:sieve}

A refinement to the approach discussed above uses the $a_\p$ values to impose
congruence conditions modulo $\p$ on $E$.
If $f$ is a newform with Hecke eigenvalues
$a_\p$, then $\#\tilde E_f(R/\p)=\bN(\p)+1-a_{\p}$.
Given $\p$ not dividing the level $\n$, we can find all elliptic curves
modulo $\p$ with the specified number of points, especially when $\bN(\p) +1 - a_\p$ has
few prime factors. We impose these congruence conditions at multiple primes $\p_i$,
use the Chinese Remainder Theorem, and lift the resulting elliptic curves modulo $R/ (\prod \p_i)$
to non-singular elliptic curves over $R$.

While this method, like the previous one, will eventually terminate, it too is very
ineffective if every $E$ in the class of isogenous elliptic curves corresponding to $f$ has
large coefficients. However in practice, by optimally choosing the number of primes $\p_i$, 
a reasonably efficient implementation of this method can be obtained.
% Thus this is
%a useful technique when we are building tables of elliptic curves
%associated to newforms and would like to reduce the number of newforms
%that we must run through slower techniques.

\subsection{Torsion families}\label{sec:torsion}
We find elliptic curves of small conductor by specializing explicit parametrizations
of families of elliptic curves over $F$ having specified torsion subgroups.
We use the parametrizations of \cite{kubert:torfam}.

\begin{theorem}[Kamienny-Najman, \cite{kamienny-najman}] The following is
a complete list of torsion structures for elliptic curves over $F$:

$$\begin{array}{lll}
\mathbb{Z}/m\mathbb{Z},   &1 \leq m \leq 10,& m = 12,\\
\mathbb{Z}/2\mathbb{Z} \oplus \mathbb{Z}/2m\mathbb{Z}, &  1 \leq m \leq 4,&\\
\mathbb{Z}/15\mathbb{Z}.&&
\end{array}$$ 
Moreover, there is a unique elliptic curve with $15$-torsion.
\end{theorem}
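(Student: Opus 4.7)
The plan is to combine general finiteness results for torsion of elliptic curves over quadratic fields (due to Kamienny, building on Mazur) with field-specific case analysis for $F=\Q(\sqrt5)$. Kamienny's extension of Mazur's theorem, together with the work of Kenku--Momose, gives a finite list $\mathcal{L}$ of possible torsion structures over any quadratic field: cyclic groups $\Z/m\Z$ for $m\in\{1,\ldots,16,18\}$, $\Z/2\Z\oplus\Z/2m\Z$ for $1\leq m\leq 6$, $\Z/3\Z\oplus\Z/3m\Z$ for $m=1,2$, and $\Z/4\Z\oplus\Z/4\Z$. Thus only finitely many structures need to be considered, and the task becomes twofold: realize each group in the conclusion by an explicit curve over $F$, and rule out every group in $\mathcal{L}$ not listed in the theorem.

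First I would handle the realizability direction by exhibiting examples. For each group $T$ appearing in the statement, Kubert's universal parametrizations \cite{kubert:torfam} over $\Q$ already specialize to curves over $\Q$, hence over $F$, carrying the required torsion; so the only genuinely new construction needed is the $\Z/15\Z$ case, which does not occur over $\Q$ but can be produced by exhibiting an $F$-rational point on $Y_1(15)$. I would verify torsion orders by computing the reduction mod several primes of good reduction of $F$.

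Next I would tackle the elimination direction group by group. For each $T\in\mathcal{L}$ not in the statement, I want to show $Y_1(N)(F)=\emptyset$ (or more precisely that the corresponding moduli space has no non-cuspidal, non-CM $F$-point). The key reductions are: (i) for $\Z/m\Z$ with $m\in\{11,13,14,16,18\}$, use that $X_1(m)$ has genus $\geq 2$ and either its Jacobian has rank $0$ over $\Q$ so the Mordell--Weil sieve or a direct quadratic-points analysis applies, or one uses Kamienny's formal immersion argument adapted to $\Q(\sqrt5)$; (ii) for $\Z/2\Z\oplus\Z/2m\Z$ with $m\in\{5,6\}$, apply the same analysis to $X_1(2,2m)$, whose genus is positive; (iii) for $\Z/3\Z\oplus\Z/3m\Z$ and $\Z/4\Z\oplus\Z/4\Z$, note these force full $3$- or $4$-torsion, hence $\mu_3\subset F$ or $\mu_4\subset F$, which fails since $F$ is totally real and distinct from $\Q(\sqrt{-3})$. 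The hardest individual cases are the cyclic ones of order $11$, $13$, $14$, $16$, $18$; here I would follow Najman's strategy of computing $J_1(N)(F)$ via descent or Chabauty--Coleman on $X_1(N)/F$, exploiting that $J_1(N)(\Q)$ is finite for these $N$ and analyzing what extra torsion can be gained upon base change to $F$.

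Finally, for the uniqueness statement I would study $X_1(15)$, which is an elliptic curve of rank $0$ over $\Q$ with torsion $\Z/4\Z$ over $\Q$. I would compute $X_1(15)(F)$ explicitly—a $2$-descent over $F$ suffices since $F$ has small class number and unit rank $1$—and check that after discarding cusps only a single non-cuspidal point remains, corresponding to the unique $j$-invariant in $F$ whose associated elliptic curve admits a $15$-torsion point. The main obstacle throughout is the unified elimination of cyclic torsion of order $11$, $13$, $14$, $16$, $18$ over $F$: each of these demands a genuine Mordell--Weil or Chabauty computation on $X_1(N)$ base-changed to $\Q(\sqrt5)$, and these are the technical heart of Najman's contribution beyond the general Kamienny bounds.
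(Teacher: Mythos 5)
The paper does not prove this theorem; it is stated with the attribution ``[Kamienny-Najman, \cite{kamienny-najman}]'' and invoked as a black box (the paper uses it in Section~\ref{sec:torsion} only to decide which Kubert families to search). There is therefore no in-paper proof to compare your argument against.

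On its own terms, your sketch is a reasonable reconstruction of the Kamienny--Kenku--Momose--Najman program: start from the finite list of torsion subgroups possible over an arbitrary quadratic field, realize the ones that occur over $F$, rule out $\Z/3\Z\oplus\Z/3m\Z$ and $\Z/4\Z\oplus\Z/4\Z$ via the Weil-pairing constraint $\mu_n\subset F$, and eliminate the remaining cyclic and $\Z/2\Z\oplus\Z/2m\Z$ cases by computing $F$-points on the relevant modular curves, finishing with an explicit analysis of $X_1(15)(F)$ for the uniqueness claim. One factual slip worth flagging: $X_1(11)$, $X_1(14)$ and $X_1(15)$ all have genus $1$ (only $X_1(13)$, $X_1(16)$, $X_1(18)$ among your cyclic cases have genus $2$), so the blanket appeal to ``genus $\geq 2$'' and Chabauty is not available for $N=11,14$; there one must instead compute the Mordell--Weil group of the genus-$1$ curve $X_1(N)$ over $F$ directly, which you do mention as an alternative. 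With that correction the outline is sound, but of course it is a roadmap rather than a proof: each elimination step ($N=13,16,18$ via descent or Chabauty on $J_1(N)/F$, and $X_1(2,10)$, $X_1(2,12)$ over $F$) requires a genuine computation that you would need to carry out to complete the argument.
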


We use the following proposition to determine in which family to
search.
\begin{proposition}\label{prop:ptor}
Let $\ell$ be a prime and $E$ an elliptic curve over $F$.
Then $\ell \mid \# E'(F)_{\tor}$ for some elliptic curve $E'$ in the isogeny class
of $E$ if and only if $\ell \mid \bN(\p)+1 - a_{\p}$ for all
odd primes $\p$ at which $E$ has good reduction.
\end{proposition}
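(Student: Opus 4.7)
The plan is to analyze the mod-$\ell$ Galois representation $\bar\rho = \bar\rho_{E,\ell} \colon G_F \to GL(E[\ell])$, using two standard facts: isogenous elliptic curves over $F$ share all $a_\p$, and their mod-$\ell$ representations have isomorphic semisimplifications.

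For the forward direction, after replacing $E$ by an isogenous curve we may assume $E(F)$ itself contains a point $P$ of order $\ell$. For odd $\p$ of good reduction with $\p \nmid \ell$, the classical injectivity of reduction on $\ell$-torsion gives that $\bar P$ has order exactly $\ell$ in $\tilde E(k_\p)$, so $\ell \mid \#\tilde E(k_\p) = \bN(\p) + 1 - a_\p$. For $\p \mid \ell$ (so $\ell$ is odd), the existence of $P$ makes $\bar\rho|_{G_{F_\p}}$ reducible, so $E$ has ordinary reduction at $\p$, and the connected-étale sequence $0 \to \mu_\ell \to E[\ell] \to \Z/\ell \to 0$ of the finite flat group scheme $E[\ell]$ over $\cO_{F_\p}$ applies. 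A short local computation shows $\mu_\ell(F_\p) = 1$ for each prime $\p$ of $F = \Q(\sqrt 5)$ above any odd $\ell$; hence $P$ has nonzero image in the étale quotient, and that quotient lifts isomorphically to its special fiber, producing a nontrivial $\ell$-torsion point in $\tilde E(k_\p)$.

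For the converse, let $V = \bar\rho^{\text{ss}}$. At each prime $\p$ of good reduction with $\p \nmid \ell$, $\text{Frob}_\p$ acts on $V$ with trace $a_\p$ and determinant $\bN(\p)$ modulo $\ell$, so the hypothesis is equivalent to $1$ being an eigenvalue of $\text{Frob}_\p$; by Chebotarev density this holds for every $g \in G_F$. A standard argument --- no absolutely irreducible subgroup of $GL_2(\overline{\F}_\ell)$ can have this property, since the Zariski closure would lie in the hypersurface $\det(M - I) = 0$, and every connected algebraic subgroup of $GL_2$ inside this hypersurface preserves a line --- forces $V$ to split as $\chi_1 \oplus \chi_2$ over $\F_\ell$, with $\chi_1 \chi_2 = \det \bar\rho = \omega$ (the mod-$\ell$ cyclotomic character). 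The identity $(\chi_1(g) - 1)(\chi_2(g) - 1) = 0$ for all $g$ then forces one of the $\chi_i$ to be trivial (otherwise, choose $g, h$ with $\chi_1(g) \neq 1 \neq \chi_2(h)$ and evaluate at $gh$), so $V \isom \mathbf 1 \oplus \omega$.

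If $\bar\rho$ itself admits a trivial subrepresentation, take $E' = E$. Otherwise $\bar\rho$ is a non-split extension with $\omega$ as subrepresentation and $\mathbf 1$ as quotient; letting $C \subset E[\ell]$ be the $G_F$-stable line with Galois action $\omega$, set $E' = E/C$ and observe that the dual isogeny $\hat\phi \colon E' \to E$ has kernel $\phi(E[\ell]) \isom E[\ell]/C$ carrying the trivial Galois action, producing the desired rational $\ell$-torsion point in $E'(F)$. The main obstacle is the forward direction at primes $\p \mid \ell$: naive reduction of $\ell$-torsion fails in equal residue characteristic, so one must instead use the integral group scheme structure of $E[\ell]$ together with the field-specific fact that the completions $F_\p$ of $\Q(\sqrt 5)$ contain no nontrivial $\ell$-th roots of unity.
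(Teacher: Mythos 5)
Your overall architecture is the right one, and it is far more detailed than the paper's own proof, which simply cites Katz [Kat81] for both directions (the forward direction via the appendix, the converse as one of the main theorems). A few comparisons and one genuine gap are worth noting.

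\textbf{Forward direction.} The paper's citation to Katz's appendix already covers good primes $\p\mid\ell$: when the ramification index $e_\p$ satisfies $e_\p < \ell-1$, the formal group $\hat E(\m_\p)$ has no $\ell$-torsion, so reduction is injective on \emph{all} torsion, not just prime-to-$\ell$ torsion. For $F=\Q(\sqrt5)$ and odd $\p$ one always has $e_\p<\ell-1$, so the single sentence of the paper is complete. Your connected-\'etale-sequence argument is a legitimate alternative route, and your observation that $\mu_\ell(F_\p)=1$ for every odd $\p\mid\ell$ is precisely the condition $e_\p<\ell-1$ in disguise; but you are reproving something already in the cited appendix. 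You should also justify, rather than assert, that reducibility of $\bar\rho|_{G_{F_\p}}$ forces ordinary reduction: this holds because for supersingular $E$ with $e_\p<\ell-1$, Raynaud's theorem plus Serre's description of the restriction to inertia (fundamental characters of level two) shows $\bar\rho|_{I_\p}$ is already irreducible.

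\textbf{Converse direction --- a genuine gap.} The step that rules out an absolutely irreducible $V=\bar\rho^{\mathrm{ss}}$ is wrong as stated. You invoke the Zariski closure of the image inside $GL_2$ and argue via its identity component, but $\bar\rho(G_F)$ is a \emph{finite} subgroup of $GL_2(\F_\ell)$: its Zariski closure is itself, a zero-dimensional group, and the identity component is trivial. The trivial group of course preserves a line, but that tells you nothing about $\bar\rho(G_F)$. The algebraic-group argument you describe is the correct one for the $\ell$-adic representation on $T_\ell E$ (where the image is an $\ell$-adic Lie group and the closure is positive-dimensional), not for the mod-$\ell$ representation. A correct mod-$\ell$ argument is still available and in the same spirit: if every $g$ in the finite image $G\subset GL_2(\overline\F_\ell)$ has $1$ as an eigenvalue, then every $h\in G\cap SL_2$ has both eigenvalues equal to $1$, so $H:=G\cap SL_2$ is unipotent, hence an $\ell$-group. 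If $H$ is trivial, $\det$ embeds $G$ into $\overline\F_\ell^\times$, so $G$ is abelian and (over an algebraically closed field) has no irreducible $2$-dimensional representation. If $H$ is nontrivial, then being a nontrivial $\ell$-group in characteristic $\ell$ it has a nonzero fixed subspace $V^H\subsetneq V$, and since $H\trianglelefteq G$ this subspace is $G$-stable, contradicting irreducibility. With this repair, the rest of your converse --- the $(\chi_1-1)(\chi_2-1)=0$ trick forcing $V\cong\mathbf1\oplus\omega$, then the case split on whether $\mathbf1$ or $\omega$ is the subrepresentation of $\bar\rho$, and the dual-isogeny argument producing $E'$ --- is correct and reproduces the content of Katz's theorem in this two-dimensional special case.
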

\begin{proof}
If $\ell  \mid \# E'(F)_{\tor}$, from the injectivity of the
reduction map at good primes \cite[Appendix]{katz:torsion}, we have that
$\ell \mid \#\tilde E'(\F_\p) = \bN(\p)+1 - a_{\p}$. The converse statement
is one of the main results of \cite{katz:torsion}.
\end{proof}

By applying Proposition~\ref{prop:ptor} for all $a_{\p}$ with $\p$ up
to some bound, we can decide whether or not it is {\em likely} that
some elliptic curve in the isogeny class of $E$ contains an $F$-rational
$\ell$-torsion point. If this is the case, then we search over those
families of elliptic curves with rational $\ell$-torsion. With a relatively small
search space, we thus find many elliptic curves with large coefficients more
quickly than with the algorithm of Section~\ref{sec:naive}.
For example, we first found the elliptic curve $E$ given by
$$y^2 + \vphi{}y = x^3 + \left(27 \vphi{} - 43\right)x + \left(-80 \vphi{} + 128\right) 
$$ with norm conductor $145$ by searching for elliptic curves with torsion subgroup $\Z/7\Z$.

\subsection{Congruence families}\label{sec:congfam}

Suppose that we are searching for an elliptic curve $E$ and we already know 
another elliptic curve $E'$ with $E[\ell] \approx E'[\ell]$, where $\ell$ is some prime
and $E[\ell]$ is irreducible. Twists of the modular curve $X(\ell)$ parametrize 
pairs of elliptic curves with isomorphic $\ell$-torsion subgroups, so finding 
rational points on the correct twist allows us to find curves with the same
mod $\ell$ Galois representation as $E'$.
Using this idea, we found the curve $E$ given by
\begin{multline}\label{equation:fisher-curve}
y^2 + \varphi{}xy =  x^3 + \left(\varphi{} - 1\right)x^2 + \\ 
\left(-257364 \varphi{} - 159063\right)x + \left(-75257037 \varphi{} - 46511406\right)
\end{multline}
with conductor $-6 \varphi + 42$, which has norm $1476$. Just given the $a_{\p}$,
we noticed that $E[7]\approx E'[7]$, where $E'$ has norm conductor $369$. The 
curve $E'$ had already been found via naive search, since it is given by the 
equation $y^2 + \left(\varphi{} + 1\right)y = x^3 + \left(\varphi{} - 1\right)x^2 + 
\left(-2 \varphi{}\right)x$. For any elliptic curve, the equation for the correct
twist of $X(7)$ was found both by Halberstadt and Kraus \cite{halb_kraus:XE7} and
by Fisher \cite{fisher:families_cong}, whose methods also yield formulas for the
appropriate twists of $X(9)$ and $X(11)$.

Fisher had already implemented \textsc{Magma} \cite{magma} routines to find
$\ell$-congruent elliptic curves over $\Q$ using these equations and was able
to modify his work for $\Q(\sqrt 5)$. Fortunately, our curve $E$ was then easily
found.

%Suppose that we are searching for a elliptic curve $E$ and we already know
%another elliptic curve $E'$ with $E[\ell] \ncisom E'[\ell]$, where $\ell$ is
%some prime and $E[\ell]$ is irreducible.  If $\ell=7,11$ then we can use
%techniques of Fisher \cite{fisher:families_cong} to attempt to search
%through the finitely many elliptic curves with mod~$\ell$ Galois representation
%isomorphic to $E[\ell]$.
%We used this approach to find the elliptic curve $E$ given by
%\begin{multline}\label{equation:fisher-curve}
%y^2 + \vphi{}xy =  x^3 + \left(\vphi{} - 1\right)x^2 + \\ \left(-257364 \vphi{} - 159063\right)x + \left(-75257037 \vphi{} - 46511406\right)
%\end{multline}
%with conductor $-38 \vphi + 10$, which has norm $1476$.  Given just the $a_{\p}$, we noticed that
%$E[7] \ncisom E'[7]$, where $E'$ has norm conductor 369, then Fisher
%used a \textsc{Magma} \cite{magma} program to find rational points on a certain
%quartic surface that parametrize elliptic curves with the same $E'[7]$.
%Fortunately, our elliptic curve $E$ was amongst those elliptic curves.
%We had already found $E'$ via a naive search, since it is given
%by the equation $y^2 + \left(\vphi{} + 1\right)y = x^3 + \left(\vphi{} - 1\right)x^2 + \left(-2 \vphi{}\right)x$.

\subsection{Twisting}\label{sec:twisting}

Let $E$ be an elliptic curve over $F$. A \emph{twist} $E'$ of $E$ is an elliptic
curve over $F$ that is isomorphic to $E$ over some extension of $F$. A
\emph{quadratic twist} is a twist in which the extension has degree
$2$.  We can use twisting to find elliptic curves that may otherwise be
difficult to find as follows: starting with a known elliptic curve $E$
of some (small) conductor, we compute its twists of conductor up to
some bound, and add them to our table.

More explicitly, if $E$ is given by $y^2=x^3+ax+b$ and $d\in F^*$,
then the twist $E^d$ of $E$ by $d$ is given by $dy^2=x^3+ax+b$; in
particular, we may assume that $d$ is square free.  The following
is well known:
\begin{proposition}\label{twist:bound}
If $\n$ is the
conductor of $E$ and $d \in \mathcal{O}_F$ is non-zero, squarefree and coprime to $\n$, then the
conductor of $E^d$ is divisible by $d^2\n$.
\end{proposition}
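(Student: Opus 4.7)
The plan is to prove divisibility locally: at each prime $\p$ of $\OF$, show that $v_\p(\n_{E^d}) \geq v_\p(d^2\n) = 2v_\p(d) + v_\p(\n)$. Since $d$ is squarefree and coprime to $\n$, the only primes to worry about are those dividing $d$ (where $v_\p(d)=1$ and $v_\p(\n)=0$) and those dividing $\n$ (where $v_\p(d)=0$); at all other $\p$ both sides vanish. The main tool is that the conductor exponent at $\p$ is the Artin conductor of the $\ell$-adic Tate module $V_\ell(E)$, for any prime $\ell$ coprime to the residue characteristic of $\p$, together with the identification $V_\ell(E^d) \cong V_\ell(E) \otimes \chi_d$, where $\chi_d \colon G_F \to \{\pm 1\}$ is the quadratic character cutting out $F(\sqrt{d})/F$.

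At a prime $\p \mid d$, the hypothesis $\p \nmid \n$ gives that $E$ has good reduction at $\p$, so $V_\ell(E)$ is unramified there. Since $v_\p(d) = 1$ is odd, the local extension $F_\p(\sqrt{d})/F_\p$ must be totally ramified of degree $2$, so $\chi_d|_{I_\p}$ is a nontrivial character of inertia. Therefore $V_\ell(E^d)|_{I_\p}$ is isomorphic to $\chi_d|_{I_\p} \oplus \chi_d|_{I_\p}$: semisimple with no nonzero fixed vectors, and in particular not unipotent. By the N\'eron--Ogg--Shafarevich criterion $E^d$ admits neither good nor multiplicative reduction at $\p$, so its reduction is additive and $v_\p(\n_{E^d}) \geq 2 = v_\p(d^2)$, which is the required bound.

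At a prime $\p \mid \n$ one has $\p \nmid d$, and in all but one case $F_\p(\sqrt{d})/F_\p$ is unramified (certainly for every odd $\p$, where $d$ is automatically a $\p$-adic unit), so $\chi_d|_{I_\p}$ is trivial; then $V_\ell(E^d)|_{I_\p} \cong V_\ell(E)|_{I_\p}$, the Artin conductors at $\p$ coincide, and one obtains the equality $v_\p(\n_{E^d}) = v_\p(\n)$. The hard part will be the residual case $\p = (2)$ with $F_\p(\sqrt{d})/F_\p$ wildly ramified: there twisting by $\chi_d$ can alter the wild part of the Artin conductor, and to still deduce $v_\p(\n_{E^d}) \geq v_\p(\n)$ one would have to compare Swan conductors of $V_\ell(E)$ and $V_\ell(E) \otimes \chi_d$ along the higher ramification filtration of $G_{F_\p}$; this is the one place where the easy inertia argument is not sufficient.
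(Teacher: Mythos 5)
Your proof takes a genuinely different route from the paper's: you phrase everything in terms of Artin conductors and the identity $V_\ell(E^d)\isom V_\ell(E)\otimes\chi_d$, whereas the paper works with Weierstrass models, deducing bad and then additive reduction at $\p\mid d$ from the indelible sixth power of $d$ in the discriminant $2^{12}d^6\Delta(E)$ together with potential good reduction, and handling $\p\mid\n$ via Ogg's formula plus the assertions that the minimal discriminant is unchanged and that N\'eron models commute with unramified base change. Your treatment of $\p\mid d$ and of odd $\p\mid\n$ is correct and recovers the paper's conclusions; the representation-theoretic packaging is arguably cleaner.

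The difficulty you flag at $\p=(2)$ is genuine, and the paper's own proof has the same hidden gap: the appeal to unramified base change requires $F_\p(\sqrt{d})/F_\p$ to be unramified, which is automatic at odd $\p$ but can fail at the inert prime $(2)$, where even a unit such as $d=-1$ cuts out a wildly ramified quadratic extension of $F_{(2)}=\Q_2(\sqrt{5})$. But the situation is worse than you suggest: it is not that a more careful Swan-conductor comparison is needed --- the inequality $v_{(2)}(\n_{E^d})\geq v_{(2)}(\n)$ is simply false in general. For instance, start from any $E_0/F$ with good reduction at $(2)$ and set $E=E_0^{-1}$; then $V_\ell(E)|_{I_{(2)}}\isom\chi_{-1}\oplus\chi_{-1}$ with $\chi_{-1}$ of conductor exponent $2$, so $v_{(2)}(\n_E)=4$, while $E^{-1}=E_0$ has $v_{(2)}(\n_{E_0})=0$. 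Thus $d=-1$ is a nonzero squarefree element coprime to $\n_E$, yet $d^2\n_E\nmid\n_{E^{-1}}$. The proposition therefore needs an extra hypothesis (e.g., $(2)\nmid\n$, or that $d$ be a square in $F_{(2)}$) to be correct as stated; the twisting search in the paper nonetheless remains exhaustive because the unit factor of $d$ is enumerated separately there.
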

\begin{proof}
There are choices of Weierstrass equations such that 
$\Delta(E^d) = 2^{12} d^6 \Delta(E)$, where $\Delta$
is the discriminant.
%var('a1,a2,a3,a4,a6,d')
%E = EllipticCurve([a1,a2,a3,a4,a6])
%factor(E.quadratic_twist(d).discriminant() / E.discriminant())
Thus the elliptic curve $E^d$ has bad reduction at each prime that divides $d$,
because twisting introduces a $6$th power of the squarefree $d$ into
the discriminant, and $d$ is coprime to $\Delta(E)$, so no change of
Weierstrass equation can remove this $6$th power.  Moreover, $E^d$ is
isomorphic to $E$ over an extension of the base field, so $E^d$ has
potentially good reduction at each prime dividing $d$.  Thus the
reduction at each prime dividing $d$ is additive.  The conductor is
unchanged at the primes dividing $\n$ because of the formula relating
the conductor, discriminant and reduction type (see \cite[App.~C,\S15]{silverman:aec}),
that formation of N\'eron models commutes with unramified base change,
and the fact that at the primes that divide $\n$ the minimal discriminant of $E^d$ is
the same as that of $E$.
%OR
%
% is the discriminant of the cubic in the Weierstrass equation.
% The elliptic curve $E^d$ thus has bad reduction at each prime that divides $d$ since no change of
%  equation can remove the $d^6$-factor in $\Delta(E^d)$, $d$ being coprime to $\Delta(E)$,  
% Moreover since $E^d$ is isomorphic to $E$ over $F(\sqrt{d})$, it has
% potentially good thus additive reduction at primes dividing $d$. We know that
% $\text{ord}_\p(\n') = 1 + \text{ord}_\p(\Delta(E^d)) - m$, where $m$ is the number of
%  connected components of the special fibre of the N\'eron model of $E$ \cite{silverman:aec}[C.~\S15]. 
% Furthermore, the formation of the N\'eron model commutes with unramified base change and at $\nu|\n$, 
%  $\text{ord}_\nu(\Delta(E^d)) = \text{ord}_\nu(\Delta(E))$ for minimal $\Delta$. 
% It follows that for $\p|d$, $\text{ord}_\p(\n') \geq 2$ as desired. 
\end{proof}

To find all twists $E^d$ with norm conductor at most $B$, we twist $E$
by all $d$ of the form $\pm \vphi^{\delta} d_0 d_1$, where $\delta\in
\{0,1\}$, $d_0$ is a product of a fixed choice of generators for
the prime divisors of $\n$, 
$d_1$ is a squarefree product of a fixed choice of generators of 
primes not dividing $\n$, and
$|\bN(d_1)| \leq \sqrt{B/C}$, where $C$ is the norm of the product of
the primes that exactly divide $\n$. We know from \ref{twist:bound} that 
this search is exhaustive.

For example, let $E$ be given by $y^2 + xy + \vphi{}y = x^3 +
\left(-\vphi{} - 1\right)x^2 $ of conductor $5\vphi - 3$ having norm $31$.
Following the above strategy to find twists of norm conductor $\leq
B := 1831$, we have $C=31$ and square-free $d_1$ such that
$|\bN(d_1)| \leq \sqrt{B/C} \approx 7.6\ldots$. Thus $d_1 \in \{1, 2,
\vphi, 2\vphi\}$ and checking all possibilities for
$\vphi^{\delta} d_0 d_1$, we find the elliptic curve $E^{-\vphi-2}$ having
norm conductor $775$ and the elliptic curve $E^{5\vphi-3}$ having norm conductor
$961$. Other twists have larger norm conductors, e.g., $E^2$ has norm
conductor $126976=2^{12}\cdot 31$.

\subsection{Elliptic Curves with good reduction outside $S$}\label{sec:cremona-lingham}

We use the algorithm of Cremona and Lingham from
\cite{cremona-lingham} to find all elliptic curves $E$ having good
reduction at primes outside of a finite set $\mathcal{S}$ of primes in
$F$. This algorithm has limitations over a general number
field $K$ due to the difficulty of finding a generating set for $E(K)$ and
points on $E$ defined over $\cO_K$.
Using Cremona's $\textsc{Magma}$ implementation of the algorithm, we 
found several elliptic curves not found by other methods, e.g.,
$y^2 + \left(\vphi{} +  1\right)xy + y = x^3 -x^2 + \left(-19 \vphi{} - 
 39\right)x + \left(-143 \vphi{} - 4\right),$ 
which has norm conductor $1331$.

\subsection{Special values of twisted $L$-series}\label{sec:specialvalues}
\newcommand{\Omegap}{\Omega^+}
\newcommand{\Omegam}{\Omega^-}
\newcommand{\Omegapp}{\Omega^{++}}
\newcommand{\Omegapm}{\Omega^{+-}}
\newcommand{\Omegamp}{\Omega^{-+}}
\newcommand{\Omegamm}{\Omega^{--}}
\newcommand{\OmegammEguess}{\Omega^{--}_{E, \mathrm{guess}}}
\newcommand{\OmegampEguess}{\Omega^{-+}_{E, \mathrm{guess}}}
\newcommand{\OmegapmEguess}{\Omega^{+-}_{E, \mathrm{guess}}}
\newcommand{\OmegappEguess}{\Omega^{++}_{E, \mathrm{guess}}}

In \cite{dembele:elliptic-curves-quadratic-fields}, Lassina \dembele
outlines some methods for finding modular elliptic curves from Hilbert
modular forms over real quadratic fields. Formally, these methods are
not proven to be any better than a direct search procedure, as they
involve making a large number of guesses, and a priori we do not know
just how many guesses we will need to make. And unlike other methods
described in this paper, this method requires many Hecke eigenvalues,
and computing these takes a lot of time. However, this method
certainly works extremely well in many cases, and after tuning it by
using large tables of elliptic curves that we had already computed, we are able
to use it to find more elliptic curves that we would have had no hope of
finding otherwise; we will give an example of one of these elliptic curves
later. 

When the level $\n$ is not square, \dembele's method relies on computing
or guessing periods of the elliptic curve by using special values of
$L$-functions of twists of the elliptic curve. In particular, the only inputs
required are the level of the Hilbert modular form and its $L$-series. So
we suppose that we know the level $\n = (N)$ of the form, where $N$ is
totally positive, and that we have sufficiently many coefficients of its
$L$-series $\ap{1}, \ap{2}, \ap{3}, \ldots$.

Let $\sigma_1$ and $\sigma_2$ denote the embeddings of $F$ into the
real numbers, with $\sigma_1(\varphi) \approx 1.61803\ldots$. For
an elliptic curve $E$ over $F$ we get two associated embeddings into
the complex numbers, and hence a pair of period lattices. Let
$\Omegap_E$ denote the smallest positive real period corresponding to the
embedding $\sigma_1$, and similarly define $\Omegam_E$ to be
the smallest period which lies on the positive imaginary axis. We
will refer to these as the periods of $E$, and as the period lattices
are interchanged when $E$ is replaced with its conjugate elliptic curve,
we let $\Omegap_{\Ebar}$ and $\Omegam_{\Ebar}$ denote the least real and
imaginary periods of the lattice under the embedding $\sigma_2$.

For ease, we write
\begin{align*}
\Omegapp_E &= \Omegap_E\Omegap_{\Ebar}& \ \Omegapm_E &= \Omegap_E\Omegam_{\Ebar} \\
\Omegamp_E &= \Omegam_E\Omegap_{\Ebar}& \ \Omegamm_E &= \Omegam_E\Omegam_{\Ebar}.
\end{align*}
We refer to these numbers as the {\em mixed periods} of $E$.

\subsubsection{Recovering the elliptic curve from its mixed periods}
If we know these mixed periods to sufficient precision, it is not hard
to recover the elliptic curve $E$. Without the knowledge of the discriminant
of the elliptic curve, we do not know the lattice type of the elliptic curve and its
conjugate, but there are only a few possibilities for what they might
be. This gives us a few possibilities for the $j$-invariant of
$E$. Observe that $\sigma_1(j(E))$ is either $j(\tau_1(E))$ or $j(\tau_2(E))$
and $\sigma_2(j(E))$ is either $j(\tau_1(\Ebar))$ or $j(\tau_2(\Ebar))$,
where
\begin{align*}
    \tau_1(E) &= \frac{\Omegamp_E}{\Omegapp_E} = \frac{\Omegam_E}{\Omegap_E}&  \tau_2(E) &= \frac{1}{2}\left(1 + \frac{\Omegamp_E}{\Omegapp_E}\right) = \frac{1}{2}\left(1 + \frac{\Omegam_E}{\Omegap_E}\right) \\
    \tau_1(\Ebar) &= \frac{\Omegapm_E}{\Omegapp_E} = \frac{\Omegam_E}{\Omegap_E} &    \tau_2(\Ebar) &= \frac{1}{2}\left(1 + \frac{\Omegapm_E}{\Omegapp_E}\right) = \frac{1}{2}\left(1 + \frac{\Omegam_{\Ebar}}{\Omegap_{\Ebar}}\right)
\end{align*}
and $j(\tau)$ is the familiar
\[
    j(\tau) = e^{-2\pi i \tau} + 744 + 196884e^{2\pi i \tau} + 21493760e^{4\pi i \tau} + \cdots.
\]
We try each pair of possible embeddings for $j(E)$ in turn, and recognize
possibilities for $j(E)$ as an algebraic number. We then construct elliptic curves $E'$
corresponding to each possibility for $j(E)$. By computing a few $\ap{}(E)$, we
should be able to determine whether we have chosen the correct $j$-invariant,
in which case $E'$ will be a twist of $E$. We can then recognize which twist it is
in order to recover $E$.

In practice, of course, as we have limited precision, and as $j(E)$ will not be
an algebraic integer, it may not be feasible to directly determine its exact value,
especially if its denominator is large.
%Still, the above approach may be useful. UNNECESSARY?

To get around the problem of limited precision, we suppose that we have some extra information; 
namely, the discriminant $\Delta_E$ of the elliptic curve we are looking for.  With $\Delta_E$ in
hand we can directly determine which $\tau$ to choose: if $\sigma_1(\Delta_E) > 0$ then
$\sigma_1(j(E)) = j(\tau_1(E))$, and if $\sigma_1(\Delta_E) < 0$ then $\sigma_1(j(E)) = j(\tau_2(E))$,
and similarly for $\sigma_2$. We then compute
$\sigma_1(c_4(E)) = (j(\tau) \sigma_1(\Delta_E))^{1/3}$
and $\sigma_2(c_4(E)) = (j(\tau') \sigma_2(\Delta_E))^{1/3}$.

Using the approximations of the two embeddings of $c_4$, we can recognize $c_4$ approximately
as an algebraic integer. Specifically, we compute
\[
    \alpha = \frac{\sigma_1(c_4) + \sigma_2(c_4)}{2}\ \ \  \text{and} \ \ \ 
    \beta = \frac{\sigma_1(c_4) - \sigma_2(c_4)}{2\sqrt{5}}.
\]
Then $c_4 = \alpha + \beta\sqrt{5}$, and we can find $c_6$.

\newcommand{\Deltaguess}{\Delta_{\textrm{guess}}}
\newcommand{\cfourguess}{c_{4,\textrm{guess}}}
\newcommand{\csixguess}{c_{6,\textrm{guess}}}
\newcommand{\Eguess}{E_\textrm{guess}}

In practice, there are two important difficulties we must overcome: we do not know
$\Delta_E$ and it may be quite difficult to get high precision approximations to the
mixed periods, and thus we may not be able to easily compute $c_4$. Thus, we actually
proceed by choosing a $\Deltaguess$ from which we
compute half-integers $\alpha$ and $\beta$ and an integer
$a + b\varphi \approx \alpha + \beta\sqrt5$, arbitrarily rounding either $a$ or $b$ if necessary.
We then make some choice of search range $M$, and for each pair of integers $m$ and~$n$,
bounded in absolute value by $M$, we try each $\cfourguess = (a + m) + (b + n)\varphi$.

Given $\cfourguess$, we attempt to solve
\[
    \csixguess = \pm \sqrt{ \cfourguess^3 - 1728 \Deltaguess },
\]
and, if we can, we use these to construct a elliptic curve $\Eguess$. If $\Eguess$ has
the correct conductor and the correct Hecke eigenvalues, we declare that
we have found the correct elliptic curve; otherwise, we proceed to the next guess.

For a choice of $\Deltaguess$, we will generally start with the conductor $N_E$,
and then continue by trying unit multiples and by adding in powers of factors of $N_E$.

\subsubsection{Guessing the mixed periods}
We have thus far ignored the issue of actually finding the mixed periods of the elliptic curve that
we are looking for. Finding them presents an extra difficulty as our procedure involves
even more guesswork. \dembele's idea is to use special values of twists of the $L$-function
$L(f, s)$. Specifically, we twist by primitive quadratic Dirichlet characters over $\OF$,
which are homomorphisms $\chi : (\OF/\fc)^* \rightarrow \pm1$, pulled back to $\OF$.

In the case of odd prime conductor, which we will stick to here, there is
just a single primitive quadratic character, which is the quadratic residue symbol.
A simple way to compute it is by making a
table of squares, or by choosing a primitive root of $g \in (\OF/\fc)^*$, assigning
$\chi(g) = -1$, and again making a table by extending multiplicatively. Alternatively,
one could use a reciprocity formula as described in \cite{boylan-skoruppa:hecke-gauss-sums}.
For general conductor, one can compute with products of characters having prime conductor.

For a given $f$ and a primitive $\chi$, we can construct the twisted $L$-function
\[
    L(f, \chi, s) = \sum_{\m \subseteq \OF} \frac{\chi(m)a{_{\m}}}{N(\m)^s},
\]
where $m$ is a totally positive generator of $\m$. (Note that $\chi$ is not well defined
on ideals, but {\em is} well defined on totally positive generators of ideals.) $L(f, \chi, s)$
will satisfy a functional equation similar to that of $L(f, s)$, but the conductor is
multiplied by $\Norm(\fc)^2$ and the sign is multiplied by $\chi(-N)$.

Oda \cite{oda:periods} conjectured relations between the periods of $f$ and the
associated elliptic curve $E$ and gave some relations between the periods of $f$ and central
values of $L(s, \chi, 1)$. Stronger versions of these relations are conjectured, and
they are what \dembele uses to obtain information about the mixed periods of $E$.
Specifically, \dembele distills the following conjecture
from \cite{bertolini-darmon-green}, which we further simplify to state
specifically for $\Q(\sqrt5)$.
\begin{conjecture}
If $\chi$ is a primitive quadratic character with conductor $\fc$ relatively prime to the
conductor of $E$, with $\chi(\varphi) = s'$ and
$\chi(1 - \varphi) = s$, (where $s, s' \in \{+, -\} = \{\pm1\}$), then
\[
    \Omega^{s,s'}_E = c_\chi \tau(\overline\chi) L(E, \chi, 1)\sqrt5,
\]
for some integer $c_\chi$, where $\tau(\chi)$ is the Gauss sum
\[
    \tau(\chi) = \sum_{\alpha \bmod \fc} \chi(\alpha) \exp\left(2 \pi i \Tr\left(\alpha/m\sqrt{5}\right) \right),
\]
with $m$ a totally positive generator of $\fc$.
\end{conjecture}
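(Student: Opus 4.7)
The plan is to model the proof on the classical Birch--Stevens formula for elliptic curves over $\Q$, lifted to the Hilbert modular setting by combining a Gauss-sum expansion of $L(f,\chi,1)$ with Oda's period conjecture. First I would pass from the cuspidal Hilbert newform $f$ attached to $E$ to its associated harmonic $(1,1)$-form on the Hilbert modular surface for level $\n$ and integrate it against the four sign eigencycles for complex conjugation at the two infinite places of $F$. This produces four canonical periods $\Omega^{s,s'}_f$. Oda's conjecture, in a sufficiently strong form, asserts that, up to a nonzero rational scalar independent of $\chi$, these match the four mixed periods $\Omega^{s,s'}_E$ of the associated elliptic curve; this is the bridge that lets the final statement refer to $E$ rather than to $f$.

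Next I would expand $L(f,\chi,1)$ as an absolutely convergent Dirichlet series and apply the Gauss sum inversion
\[
    \chi(m) = \frac{1}{\tau(\overline{\chi})}\sum_{\alpha \bmod \fc} \overline{\chi}(\alpha)\exp\!\bigl(2\pi i\,\Tr(\alpha m/(c\sqrt{5}))\bigr),
\]
valid for totally positive $m$ coprime to a fixed totally positive generator $c$ of $\fc$. Substituting this into the standard double Mellin representation of $L(f,\chi,1)$, as an integral of $f(it_1,it_2)$ over $(0,\infty)^2$ against $t_1^{s-1}t_2^{s-1}$ at $s=1$, converts the twisted $L$-value into a finite $\overline\chi$-weighted sum over $\alpha \bmod \fc$ of period integrals of $f$ along the real two-cycles with cusp parameters $\alpha/(c\sqrt 5)$ and $\sigma_2(\alpha/(c\sqrt 5))$. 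The action of complex conjugation at the two infinite places on these cycles introduces the signs $\chi(\varphi)$ and $\chi(1-\varphi)$, because $\varphi$ and $1-\varphi = -\overline{\varphi}$ have opposite signs under the two real embeddings and together generate the totally positive units of $\OF$ modulo squares. Projection onto the $(s,s')$ sign eigenspace therefore isolates $\Omega^{s,s'}_f$, up to the factor $\tau(\overline\chi)\sqrt 5$ that was pulled out, and Oda's conjecture converts this to $\Omega^{s,s'}_E$ with a rational proportionality constant $c_\chi$.

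The main obstacle is the integrality assertion $c_\chi \in \Z$, which is precisely why the statement is phrased as a conjecture rather than a theorem. Over $\Q$ the integrality of the analogous constant is supplied by the theory of modular symbols with $\Z$-coefficients, combined with control of the Manin constant for the optimal quotient. Over $\Q(\sqrt 5)$ one would need an integral refinement of Oda's conjecture, an integral structure on the degree-two Betti cohomology of a suitable Hilbert modular surface compatible with both the Hecke action and with the two complex conjugations, and a bound on a Manin-style constant for $E$ that is uniform in the twisting character. Each of these is presently known only up to bounded denominators, and uniform integrality of $c_\chi$ in $\chi$ is essentially the arithmetic content drawn from \cite{bertolini-darmon-green}. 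The proposal thus reaches the identity up to a nonzero rational factor, and the genuinely conjectural content is the integer-valuedness of that factor.
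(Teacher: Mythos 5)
This statement appears in the paper as a \emph{conjecture}, not a theorem: the authors explicitly say that Demb\'el\'e distills it from Bertolini--Darmon--Green \cite{bertolini-darmon-green}, and they restate it specialized to $\Q(\sqrt5)$ without offering any proof. There is therefore no ``paper's own proof'' to compare against, and your task was not to prove the conjecture but to recognize its provenance and status.

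Your write-up gets this essentially right: you correctly frame the equality up to a rational scalar as the conjunction of a Birch--Stevens-style Gauss-sum inversion applied to the Mellin integral representation of $L(f,\chi,1)$ with a (strong form of) Oda's period conjecture, and you correctly identify the genuinely conjectural content as the integrality of $c_\chi$, pointing to the missing integral refinement of Oda's conjecture and a uniform Manin-type constant as the obstructions. This matches the paper's own attribution and the role the statement plays in Section~\ref{sec:specialvalues}. One small inaccuracy worth flagging: you say that $\varphi$ and $1-\varphi$ ``together generate the totally positive units modulo squares,'' but $1-\varphi = -\overline\varphi$ is not totally positive (its two embeddings have opposite signs); the relevant point is rather that $\varphi$ and $1-\varphi$ generate $\cO_F^*/(\cO_F^*)^2 \cong (\Z/2)^2$ and that their sign patterns at the two real places are what determine which of the four mixed-period eigenspaces is picked out, which is exactly why the conjecture is stated in terms of $\chi(\varphi)$ and $\chi(1-\varphi)$. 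With that correction, your account is a fair heuristic explanation of the conjecture, but it is not, and cannot be, a proof, since the statement remains open.
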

\begin{remark}
  %This statement has been specialized to $\Q(\sqrt 5)$, but not very
  %much. In the more general case of a real quadractic field, $5$ is
  %replaced by the discriminant, and $\vphi$ is replaced by a
  %fundamental unit.
  The Gauss sum is more innocuous than it
  seems. For odd conductor $\fc$ it is of size $\sqrt{\Norm(\fc)}$, while
  for an even conductor it is of size $\sqrt{2 \Norm(\fc) }$.  Its sign is
  a $4$-th root of unity, and whether it is real or imaginary can be
  deduced directly from the conjecture, as it matches with the sign of
  $\Omega^{s,s'}_E$. In particular, $\tau(\chi)$ is real when
  $\chi(-1) = 1$ and imaginary when $\chi(-1) = -1$, which is a
  condition on $\Norm(\fc) \bmod 4$, as $\chi(-1) \equiv \Norm(\fc) \pmod
  4$. This can all be deduced, for example, from
  \cite{boylan-skoruppa:hecke-gauss-sums}.

Also, note that \dembele writes this conjecture with an additional factor of $4\pi^2$;
this factor does not occur with the definition of $L(f, s)$ that we have given.

\end{remark}

\begin{remark}
Contained in this conjecture is the obstruction to carrying out the method described here when 
$\n$ is a square. If the sign of the functional equation of $L(f, s)$ is $\epsilon_f$,
then the sign of $L(f, \chi, s)$ will be $\chi(-N)\epsilon_f$.
When $\n$ is a perfect square, this is completely determined by whether
or not $\chi(\varphi) = \chi(1 - \varphi)$, so we can only obtain information about
either $\Omegamm$ and $\Omegapp$ or $\Omegamp$ and $\Omegapm$, and we need three of these
values to find $E$.
\end{remark}

With this conjecture in place, we can describe a method for guessing the mixed periods of
$E$.
Now, to proceed, we construct four lists of characters
up to some conductor bound $M$ (we are restricting to odd prime modulus here for simplicity,
as primitivity is ensured, but this is not necessary):
\[
    S^{s,s'} = \{ \chi \bmod \p : \chi(\phi) = s', \chi(1 - \phi) = s,
            (\p, \n) = 1, \Norm(\p) < M, \chi(-N) = \epsilon_f\}.
\]
Here $s, s' \in \{+, -\} = \{ \pm 1 \}$ again, and we restrict our choice of characters
to force the functional equation of $L(s, \chi, f)$ to have positive sign so that there
is a good chance that it does not vanish as the central point.
We will consider these lists to be ordered by the
norms of the conductors of the characters in increasing order, and index their elements as
$\chi^{s,s'}_0, \chi^{s,s'}_1, \chi^{s,s'}_2, \ldots$. For each character we compute the central
value of the twisted $L$-function to get four new lists
\[
    \mathcal{L}^{s,s'} = \{ i^{ss'}\sqrt{5 \Norm(\p) } L(E,\chi,1), \chi \in S^{s,s'}\} =
        \{\mathcal{L}^{s,s'}_0, \mathcal{L}^{s,s'}_1, \ldots\}.
\]
These numbers should now all be integer multiples of the mixed periods, so to get an idea
of which integer multiples they might be, we compute each of the ratios 
\[
    \frac{\mathcal{L}^{s,s'}_0}{\mathcal{L}^{s,s'}_k} = \frac{c_{\chi^{s,s'}_0}}{c_{\chi^{s,s'}_k}} \in \Q,\quad  k = 1, 2, \ldots,
\]
attempt to recognize these as rational numbers, and 
choose as an initial guess
\[
    \Omega^{ss'}_{E, \mathrm{guess}} = \mathcal{L}^{s,s'}_0\left(\mathrm{lcm}\left\{ \mathrm{numerator}\left(\frac{\mathcal{L}^{s,s'}_0}{\mathcal{L}^{s,s'}_k}\right), k = 1,2, \ldots \right\}\right)^{-1}.
\]

\subsubsection{An example}
We give an example of an elliptic curve that we were only able to find by using
this method. At level $\n = (-38\varphi + 26)$ we found a newform $f$, computed
\begin{multline*}
    a_{(2)}(f) = -1,\  a_{(-2\varphi + 1)}(f) = 1,\  a_{(3)}(f) = -1, \\ 
    a_{(-3\varphi + 1)}(f) = -1,\  a_{(-3\vphi + 2)}(f) = -6, \cdots, a_{(200\varphi - 101)}(f) = 168
\end{multline*}
and determined, by examining the $L$-function, that the sign of the functional equation
should be $-1$. (In fact, we do not really need to know the sign of the functional equation,
as we would quickly determine that $+1$ is wrong when attempting to find the mixed periods.)
Computing the sets of characters described above, and choosing the first $3$ of each, we
have
\begin{multline*}
S^{--} = \{\chi_{(\varphi + 6)}, \chi_{(7)}, \chi_{(7\varphi - 4)}\}, \ \ 
    S^{-+} = \{\chi_{(-3\varphi + 1)}, \chi_{(5 \varphi - 2)}, \chi_{(\varphi - 9)}\} \\
S^{+-} = \{\chi_{(-4\varphi + 3)}, \chi_{(5\varphi - 3)}, \chi_{(-2\varphi + 13)} \} \ \ 
    S^{++} = \{\chi_{(\varphi + 9)}, \chi_{(9\varphi - 5)}, \chi_{(\varphi + 13)} \}.
\end{multline*}

By using the $5133$ eigenvalues above as input to
Rubinstein's {\tt lcalc} \cite{lcalc}, we compute the lists of approximate
values
\[
\begin{split}
\mathcal{L}^{--} &= \{-33.5784397862407, -3.73093775400387, -18.6546887691646 \} \\ 
\mathcal{L}^{-+} &= \{18.2648617736017i, 32.8767511924831i, 3.65297235421633i \} \\
\mathcal{L}^{+-} &= \{41.4805656925342i, 8.29611313850694i, 41.4805677827298i\} \\
\mathcal{L}^{++} &= \{32.4909970742969, 162.454985515474, 162.454973589303\}.
\end{split}
\]
Note that {\tt lcalc} will warn us that we do not have enough coefficients to obtain good
accuracy, and we make no claim as far as the accuracy of these values is concerned.
Hoping that the ends will justify the means, we proceed forward.

Dividing each list by the first entry, and recognizing the quotients as rational
numbers, we get the lists
\[
\begin{split}
       \{1.000, 9.00000000005519, 1.80000000009351\}  &\approx \{1, 9, 9/5\}\\
       \{1.000, 0.555555555555555, 5.00000000068986 \} &\approx \{1, 5/9, 5\}\\
       \{1.000, 4.99999999999994, 0.999999949610245 \}  &\approx \{1, 5, 1\} \\ 
       \{1.000, 0.199999999822733, 0.200000014505165 \} &\approx \{1, 1/5, 1/5\},
\end{split}
\]
which may give an indication of the accuracy of our values. We now proceed with
the guesses
\begin{align*}
\phantom{MMMMM} \OmegammEguess &\approx -33.5784397862407/9 &\approx&\  -3.73093775402141 \phantom{MMMMM} \\
    \OmegampEguess &\approx 18.2648617736017i/5 &\approx&\  3.65297235472034i \\
    \OmegapmEguess &\approx 41.4805656925342i/5 &\approx&\  8.29611313850683i \\
    \OmegappEguess &\approx 32.4909970742969    &=&\  32.4909970742969.
\end{align*}
These cannot possibly be all correct, as $\Omegamm_E \Omegapp_E = \Omegamp_E \Omegapm_E$.
Still, we can choose any three and get a reasonable guess, and in fact we may choose all
possible triples, dividing some of the guesses by small rational numbers, and choosing the
fourth guess to be consistent with the first three; we build a list of possible embeddings
of $j(E)$, which will contain the possibility $\sigma_1(j(E)) \approx 1.365554233954 \times 10^{12}$,
$\sigma_2(j(E)) \approx 221270.95861123$, which is a possibility if 
\[\Omegamp_E = \OmegampEguess,\ \ 
\Omegapm_E = \OmegapmEguess,\ \ \Omegamp_E = \frac{\OmegampEguess}{2},\ \ \Omegapp_E = \frac{\OmegappEguess}{8}.\]
Cycling through many discriminants, we eventually try
\[
    \Deltaguess = \varphi \cdot 2^5 \cdot (19\varphi - 13),
\]
which leads us to the guess
\begin{align*}
    \sigma_1(\cfourguess) &= (\sigma_1(j(E)) \sigma_1(\Deltaguess))^{1/3} \approx 107850.372979378 \\
    \sigma_2(\cfourguess) &= (\sigma_2(j(E)) \sigma_2(\Deltaguess))^{1/3} \approx 476.625892034286.
\end{align*}
We have enough precision to easily recognize this as
\[
    \cfourguess = \frac{108327 + 48019 \sqrt5}{2} = 48019\varphi + 30154,
\]
and
\[
    \sqrt{\cfourguess^3 - 1728\Deltaguess}
\]
does in fact have two square roots: $\pm(15835084\varphi + 9796985)$. We try both of them, and
the choice with the minus sign gives the elliptic curve
\[
y^2 + \varphi xy + \varphi y = x^3 + \left(\varphi - 1\right)x^2 + \left(-1001 \varphi - 628\right)x + \left(17899 \varphi + 11079\right),
\]
which has the correct conductor. We compute a few values of $a_\p$ for this elliptic curve, and it turns
out to be the one that we are looking for.

\section{Enumerating the elliptic curves in an isogeny class}\label{sec:isoclass}

Given an elliptic curve $E/F$, we wish to find representatives up to isomorphism
for all elliptic curves $E'/F$ that are isogenous
to $E$ via an isogeny defined over $F$. The analogue of this problem
over $\Q$ has an algorithmic solution as explained in
\cite[\S3.8]{cremona:algs}; it relies on:
\begin{enumerate}
\item  Mazur's theorem \cite{mazur:rational} 
that if $\psi:E\to E'$ is a $\Q$-rational isogeny of prime degree, 
then $\deg(\psi)\leq 163$.
\item Formulas of V\'elu \cite{velu:isogenies} 
that provide a way to explicitly enumerate all $p$-isogenies (if any) with domain $E$.  
V\'elu's formulas are valid for any number field, but so far there has not been an explicit
generalization of Mazur's theorem for any number field other than $\Q$. 
\end{enumerate}

\begin{remark}
  Assume the generalized Riemann hypothesis.  Then work of Larson-Vaintrob 
  from \cite{larson-vaintrob} implies that there is an effectively computable constant $C_F$ 
  such that if $\varphi: E \to E'$ is a prime-degree isogeny defined over $F$ and
  $E'$ and $E$ are not isomorphic over $F$, then $\varphi$ has degree at most $C_F$.
\end{remark}

Since we are interested in specific isogeny classes, 
we can use the algorithm described in \cite{billerey:isog} that takes as
input a specific non-CM elliptic curve $E$ over a number field $K$, and
outputs a provably finite list of primes $p$ such that $E$ might have a
$p$-isogeny. The algorithm is particularly easy to implement in the
case when $K$ is a quadratic field, as explained in
\cite[\S2.3.4]{billerey:isog}.  Using this algorithm combined with
V\'elu's formulas, we were able to enumerate {\em all}
isomorphism classes of elliptic curves isogenous to the elliptic curves we found via the
methods of Section~\ref{sec:finding}, and thus divide our isogeny classes into
isomorphism classes.

\section{CM elliptic curves over $F$}\label{sec:cm}

In this section we make some general remarks about CM elliptic curves
over $F$.  The main surprise is that there are $31$ distinct
$\Qbar$-isomorphism classes of CM elliptic curves defined over $F$,
more than for any other quadratic field.

\begin{proposition}
The field $F$ has more isomorphism classes of CM
elliptic curves than any other quadratic field.
\end{proposition}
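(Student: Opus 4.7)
My plan is to reduce the claim to a finite verification. A CM $j$-invariant is an algebraic integer whose degree over $\Q$ equals the class number of its endomorphism ring, so a CM $j$-invariant can lie in a quadratic field only if the associated order has class number $1$ or $2$. There are $13$ imaginary quadratic orders of class number one, producing $13$ rational CM $j$-invariants that lie in every quadratic field, and $29$ imaginary quadratic orders of class number two, each producing a Galois-conjugate pair of $j$-invariants lying in one specific quadratic field.

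I would first dispose of the imaginary quadratic fields. The ring class field of an imaginary quadratic order is a CM field whose maximal totally real subfield is generated by the $j$-invariant; hence when $h(\cO)=2$ the field $\Q(j(\cO))$ is real quadratic, and so cannot be contained in any imaginary quadratic field. Consequently, every imaginary quadratic field contains exactly the $13$ rational CM $j$-invariants.

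The main step is then to identify, for each of the $29$ class-number-two orders $\cO$, the real quadratic field $\Q(j(\cO))$. For the $18$ maximal orders of fundamental discriminant $D = d_1 d_2$ (where $d_1, d_2$ are the two prime discriminants dividing $D$), genus theory identifies the Hilbert class field of $\Q(\sqrt D)$ with the biquadratic field $\Q(\sqrt{d_1}, \sqrt{d_2})$, whose unique real quadratic subfield is $\Q(\sqrt{d_i})$ for the positive prime discriminant $d_i$. For the $11$ non-maximal class-number-two orders $\cO\subset\cO_K$ of conductor $f$, the ring class field is determined via the exact sequence
\[
1 \to \cO_K^{\times}/\cO^{\times} \to (\cO_K/f\cO_K)^{\times}/(\Z/f\Z)^{\times} \to \mathrm{Pic}(\cO) \to \mathrm{Pic}(\cO_K) \to 1
\]
combined with Artin reciprocity; equivalently, one computes each $j$-invariant numerically to high precision and recognizes it as an element of an explicit real quadratic field.

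Tallying the outputs, $\Q(\sqrt 5)$ arises from exactly nine orders: the six maximal orders of discriminants $-15,-20,-35,-40,-115,-235$ (whose prime-discriminant factorizations all involve the positive prime discriminant $5$), together with the three non-maximal orders of discriminants $-60,-75,-100$ (conductor $2$ in $\Q(\sqrt{-15})$, conductor $5$ in $\Q(\sqrt{-3})$, and conductor $5$ in $\Q(i)$). This gives $13 + 2\cdot 9 = 31$ CM $j$-invariants in $F$, and a direct inspection of the tally confirms that every other real quadratic field is produced by strictly fewer than nine orders. The main obstacle is the bookkeeping for the non-maximal ring class fields: one must check in each of the three relevant cases that the correct quadratic subfield of the biquadratic ring class field is $\Q(\sqrt 5)$ and not $\Q(\sqrt{15})$ or another competitor. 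Since the list of class-number-two orders is short and completely explicit, this verification is routine.
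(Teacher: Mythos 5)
Your proof is correct and follows the same overall strategy as the paper: reduce to the finite list of $13$ imaginary quadratic orders of class number one and $29$ of class number two (citing Cremona for the latter), determine the quadratic field generated by each class-number-two $j$-invariant, and tally. Where you diverge is in \emph{how} you identify those fields. The paper simply computes the discriminant of each quadratic Hilbert class polynomial $H_D$ and reads off the field; you instead argue structurally. You first observe that $\Q(j(\cO))$ is always the maximal totally real subfield of the ring class field, so it is real quadratic when $h(\cO)=2$ and hence no imaginary quadratic field can contain more than the $13$ rational CM $j$-invariants---a point the paper's table exhibits but does not explain. You then use genus theory to read the real quadratic field directly off the prime-discriminant factorization $D=d_1d_2$ for the $18$ maximal orders, reserving the exact-sequence/numerical argument only for the $11$ non-maximal ones. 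This is a cleaner and more conceptual way to produce the paper's table, at the cost of a little extra case analysis for the non-maximal orders (where genus theory for non-fundamental discriminants is less immediate); the paper's discriminant computation is a uniform black box that handles all $29$ cases at once. Both correctly identify the nine discriminants $\{-15,-20,-35,-40,-60,-75,-100,-115,-235\}$ producing $\Q(\sqrt5)$, yielding $13+2\cdot 9 = 31$.
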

\begin{proof}
Let $K$ be a quadratic extension of $\Q$.
Let $H_D$ denote the Hilbert class polynomial of the CM order
$\cO_D$ of discriminant $D$, so $H_D\in \Q[X]$ is the minimal
polynomial of the $j$-invariant $j_D$ of any elliptic curve $E=E_D$ with
CM by $\cO_D$.   Since $K$ is Galois, we have $j_D \in K$ if and only if
$H_D$ is either linear or quadratic with both roots in $K$.
The $D$ for which $H_D$ is linear are the thirteen values
$-3, -4, -7, -8, -11, -12, -16, -19, -27, -28, -43, -67, -163$.
According to \cite{cremona:abvar}, the
$D$ for which $H_D$ is quadratic are the following $29$
discriminants:
\begin{align*}
&-15, -20, -24, -32, -35, -36, -40, -48, -51, -52, -60, \\
&-64, -72, -75, -88, -91, -99, -100, -112, -115, -123, \\
&-147, -148, -187, -232, -235, -267, -403, -427.
\end{align*}
%z = list(reversed(sorted([D*f^2 for D,f in cm_orders(2)]))); z 
%len(z)
 
By computing discriminants of these Hilbert class polynomials,
we obtain the following table:
% w = {}
% for D, f in cm_orders(2):
%     m = hilbert_class_polynomial(D*f^2).discriminant().squarefree_part()
%     print D, m
%     if not w.has_key(m):
%         w[m] = []
%     w[m].append(D)
%     w[m] = list(reversed(sorted(w[m])))
% print '-'*70
% print w

% for F, Ds in sorted(w.iteritems()):
%     print "\\Q(\\sqrt{%s}) & $%s$ \\\\\\hline"%(F,
%                   ','.join([str(z) for z in Ds]))

\begin{center}
\begin{tabular}{@{}llcll@{}}\toprule
Field & $D$ so $H_D$ has roots in field & \phantom{ab} & Field & $D$ so $H_D$ has roots in field \\\cmidrule{1-2}\cmidrule{4-5}
$\Q(\sqrt{2})$ & $-24,-32,-64,-88$ & & $\Q(\sqrt{21})$ & $-147$ \\
$\Q(\sqrt{3})$ & $-36,-48$ & & $\Q(\sqrt{29})$ & $-232$ \\
\multirow{2}{*}{$\Q(\sqrt{5})$} & $-15,-20,-35,-40,-60,$ & & $\Q(\sqrt{33})$ & $-99$ \\
      & $-75,-100,-115,-235$ & & $\Q(\sqrt{37})$ & $-148$ \\
$\Q(\sqrt{6})$ & $-72$ & & $\Q(\sqrt{41})$ & $-123$ \\
$\Q(\sqrt{7})$ & $-112$ & & $\Q(\sqrt{61})$ & $-427$ \\
$\Q(\sqrt{13})$ & $-52,-91,-403$ & & $\Q(\sqrt{89})$ & $-267$ \\
$\Q(\sqrt{17})$ & $-51,-187$ & & & \\\bottomrule
\end{tabular}
\end{center}
The claim follows because the $\Q(\sqrt{5})$ row is largest,
containing $9$ entries.  There are thus $31 = 2\cdot 9 + 13$ distinct CM $j$-invariants
in $\Q(\sqrt{5})$.

\end{proof}

\section{Related future projects}\label{sec:future}

It would be natural to extend the tables to the first known elliptic curve of
rank $3$ over $F$, which may be the elliptic curve $y^2 + y = x^3 -2x + 1$ of
norm conductor $163^2=26569$.  It would also be interesting to make a
table in the style of \cite{stein-watkins:ants5}, and compute analytic
ranks of the large number of elliptic curves that we would find; this would
benefit from Sutherland's {\tt smalljac} program, which has very fast code
for computing $L$-series coefficients.  Some aspects of the tables
could also be generalized to modular abelian varieties $A_f$ attached
to Hilbert modular newforms with not-necessarily-rational Hecke
eigenvalues; in particular, we could enumerate the $A_f$ up to some
norm conductor, and numerically compute their analytic ranks.

\section{Tables}\label{sec:tables}

As explained in Sections~\ref{sec:finding} and \ref{sec:isoclass},
assuming Conjecture~\ref{conj:mod}, we found the complete list of
elliptic curves with norm conductor up to $1831$, which is the first
norm conductor of a rank $2$ elliptic curve over $F$. The complete
dataset can be downloaded from \cite{sqrt5data}.

In each of the following tables \#isom refers to the number of isomorphism
classes of elliptic curves, \#isog refers to the number of isogeny classes of
elliptic curves, $\n$ refers to the conductor of the given elliptic curve,
and Weierstrass equations are given in the form $[a_1,a_2,a_3,a_4,a_6]$.

Table~\ref{table:total-counts} gives the number of elliptic curves and isogeny
classes we found. Note that in these counts we do not exclude
conjugate elliptic curves, i.e., if $\sigma$ denotes the nontrivial element of
$\Gal(F/\Q)$, then we count $E$ and $E^{\sigma}$ separately if they
are not isomorphic.
\begin{center}
\begin{table}[h]
\caption{Elliptic Curves over $\Q(\sqrt{5})$\label{table:total-counts}}
\begin{tabular}{@{}lcrrcr@{}}\toprule
\textbf{rank} & \phantom{a} &\textbf{\#isog} & \textbf{\#isom} & \phantom{a} & \textbf{smallest $\Norm(\n)$} \\\cmidrule{1-1}\cmidrule{3-4}\cmidrule{6-6}
$0$   & & $745$  & $2174$ & & $31$\\
$1$   & & $667$  & $1192$ & & $199$ \\
$2$   & & $2$    & $2$    & & $1831$ \\\cmidrule{1-6}
total & & $1414$ & $3368$ & & -\; \\\bottomrule
\end{tabular}
\end{table}
\end{center}

Table~\ref{table:isogeny-sizes} gives counts of the number of isogeny
classes of elliptic curves in our data of each size; note that we find some
isogeny classes of cardinality $10$, which is bigger than what one
observes with elliptic curves over $\QQ$.
\begin{center}
\begin{table}[h]
\caption{Number of isogeny classes of a given size\label{table:isogeny-sizes}}
\begin{tabular}{@{}lcrrrrrrrcr@{}}\toprule
& \phantom{a} & \multicolumn{7}{c}{\textbf{size}} & \phantom{a} & \\\cmidrule{3-9}
\textbf{bound} & & 1   & 2   & 3  & 4   & 6  & 8  & 10 & & \textbf{total} \\\midrule
199  & & 2   & 21  & 3  & 20  & 8  & 9  & 1  & & 64    \\
1831 & & 498 & 530 & 36 & 243 & 66 & 38 & 3  & & 1414  \\\bottomrule
\end{tabular}
\end{table}
\end{center}

Table~\ref{table:increasing-counts} gives the number of elliptic curves and isogeny classes
up to a given norm conductor bound. Note that the first elliptic curve of rank $1$ has
norm conductor $199$, and there are no elliptic curves of norm conductor $200$.
\begin{center}
\begin{table}[h]
\caption{Counts of isogeny classes and elliptic curves with bounded norm conductors and specified ranks\label{table:increasing-counts}}
\begin{tabular}{@{}lcrrrcrcrrrcr@{}}\toprule
& \phantom{a} & \multicolumn{5}{c}{\textbf{\#isog}}                  & \phantom{ab} & \multicolumn{5}{c}{\textbf{\#isom}}                  \\\cmidrule{3-7}\cmidrule{9-13}
&             & \multicolumn{3}{c}{\textbf{rank}} & &                &              & \multicolumn{3}{c}{\textbf{rank}} & &                \\\cmidrule{3-5}\cmidrule{9-11}
\textbf{bound} &              & 0 & 1 & 2          & & \textbf{total} &              & 0 & 1 & 2                         & & \textbf{total} \\\midrule
200  & & 62  & 2   & 0 & & 64   & & 257  & 6    & 0 & & 263  \\
400  & & 151 & 32  & 0 & & 183  & & 580  & 59   & 0 & & 639  \\
600  & & 246 & 94  & 0 & & 340  & & 827  & 155  & 0 & & 982  \\
800  & & 334 & 172 & 0 & & 506  & & 1085 & 285  & 0 & & 1370 \\
1000 & & 395 & 237 & 0 & & 632  & & 1247 & 399  & 0 & & 1646 \\
1200 & & 492 & 321 & 0 & & 813  & & 1484 & 551  & 0 & & 2035 \\
1400 & & 574 & 411 & 0 & & 985  & & 1731 & 723  & 0 & & 2454 \\
1600 & & 669 & 531 & 0 & & 1200 & & 1970 & 972  & 0 & & 2942 \\
1800 & & 729 & 655 & 0 & & 1384 & & 2128 & 1178 & 0 & & 3306 \\
1831 & & 745 & 667 & 2 & & 1414 & & 2174 & 1192 & 2 & & 3368 \\\bottomrule
\end{tabular}
\end{table}
\end{center}

Table~\ref{table:degree} gives the number of elliptic curves and isogeny classes with
isogenies of each degree; note that we do not see all possible isogeny
degrees. For example, the elliptic curve $X_0(19)$ has rank 1 over $F$, so
there are infinitely many elliptic curves over $F$ with degree 19 isogenies
(unlike over $\Q$ where $X_0(19)$ has rank $0$).
We also give an example of an elliptic curve (that need not have minimal conductor)
with an isogeny of the given degree.
\begin{center}
\begin{table}[h]
\caption{Isogeny degrees\label{table:degree}}
\begin{tabular}{@{}lcrrclr@{}}\toprule
\textbf{degree} & \phantom{a} & \textbf{\#isog} & \textbf{\#isom} & \phantom{a} & \textbf{example curve} & $\Norm(\n)$ \\\cmidrule{1-1}\cmidrule{3-4}\cmidrule{6-7}
None & & 498 & 498  & & $[\varphi+1,1,1,0,0]$                               & 991  \\
2    & & 652 & 2298 & & $[\varphi,-\varphi+1,0,-4,3\varphi-5]$              & 99   \\
3    & & 289 & 950  & & $[\varphi,-\varphi,\varphi,-2\varphi-2,2\varphi+1]$ & 1004 \\
5    & & 65  & 158  & & $[1,0,0,-28,272]$                                   & 900  \\
7    & & 19  & 38   & & $[0,\varphi+1,\varphi+1,\varphi-1,-3\varphi-3]$     & 1025 \\\bottomrule
\end{tabular}
\end{table}
\end{center}

Table~\ref{table:torsion} gives the number of elliptic curves with each
torsion structure, along with an example of an elliptic curve (again, not necessarily with minimal conductor)
with that torsion structure.
\begin{center}
\begin{table}[h]
\caption{Torsion subgroups\label{table:torsion}}
\begin{tabular}{@{}lcrclr@{}}\toprule
\textbf{structure} & \phantom{a} & \textbf{\#isom} & \phantom{a} & \textbf{example curve} & $\Norm(\n)$ \\\cmidrule{1-1}\cmidrule{3-3}\cmidrule{5-6}
1                    & & 796  & & $[0,-1,1,-8,-7]$                      & 225 \\
$\Z/2\Z$             & & 1453 & & $[\vphi,-1,0,-\vphi-1,\vphi-3]$       & 164 \\
$\Z/3\Z$             & & 202  & & $[1,0,1,-1,-2]$                       & 100 \\
$\Z/4\Z$             & & 243  & & $[\vphi+1,\vphi-1,\vphi,0,0]$         & 79  \\
$\Z/2\Z\oplus\Z/2\Z$ & & 312  & & $[0,\vphi+1,0,\vphi,0]$               & 256 \\
$\Z/5\Z$             & & 56   & & $[1,1,1,22,-9]$                       & 100 \\
$\Z/6\Z$             & & 183  & & $[1,\vphi,1,\vphi-1,0]$               & 55  \\
$\Z/7\Z$             & & 13   & & $[0,\vphi-1,\vphi+1,0,-\vphi]$        & 41  \\
$\Z/8\Z$             & & 21   & & $[1,\vphi+1,\vphi,\vphi,0]$           & 31  \\
$\Z/2\Z\oplus\Z/4\Z$ & & 51   & & $[\vphi+1,0,0,-4,-3\vphi-2]$          & 99  \\
$\Z/9\Z$             & & 6    & & $[\vphi,-\vphi+1,1,-1,0]$             & 76  \\
$\Z/10\Z$            & & 12   & & $[\vphi+1,\vphi,\vphi,0,0]$           & 36  \\
$\Z/12\Z$            & & 6    & & $[\vphi,\vphi+1,0,2\vphi-3,-\vphi+2]$ & 220 \\
$\Z/2\Z\oplus\Z/6\Z$ & & 11   & & $[0,1,0,-1,0]$                        & 80  \\
$\Z/15\Z$            & & 1    & & $[1,1,1,-3,1]$                        & 100 \\
$\Z/2\Z\oplus\Z/8\Z$ & & 2    & & $[1,1,1,-5,2]$                        & 45  \\\bottomrule
\end{tabular}
\end{table}
\end{center}

We computed the invariants in the Birch and Swinnerton-Dyer conjecture
for our elliptic curves, and solved for the conjectural order of $\Sha$;
Table~\ref{table:sha} gives the number of elliptic curves in our data
having each order of $\Sha$ as well as an elliptic curve of minimal conductor
exhibiting each of these orders.
\begin{center}
\begin{table}[h]
\caption{$\Sha$\label{table:sha}}
\begin{tabular}{@{}lcrclr@{}}\toprule
\textbf{\#\Sha} & \phantom{a} & \textbf{\#isom} & \phantom{a} & \textbf{first elliptic curve having \#\Sha}& $\Norm(\n)$ \\\cmidrule{1-1}\cmidrule{3-3}\cmidrule{5-6}
1                   & & 3191                & & $[1,\vphi+1,\vphi,\vphi,0]$                        & 31                    \\
4                   & & 84                  & & $[1, 1, 1, -110, -880]$                            & 45                    \\
\multirow{2}{*}{9}  & & \multirow{2}{*}{43} & & $[\vphi+1,-\vphi,1,-54686\vphi-35336,$             & \multirow{2}{*}{76}   \\
                    & &                     & & \multicolumn{1}{r}{$-7490886\vphi-4653177]$}       &                       \\
\multirow{2}{*}{16} & & \multirow{2}{*}{16} & & $[1,\vphi,\vphi+1,-4976733\vphi-3075797,$          & \multirow{2}{*}{45}   \\
                    & &                     & & \multicolumn{1}{r}{$-6393196918\vphi-3951212998]$} &                       \\
25                  & & 2                   & & $[0, -1, 1, -7820, -263580]$                       & 121                   \\
\multirow{2}{*}{36} & & \multirow{2}{*}{2}  & & $[1,-\vphi+1,\vphi,1326667\vphi-2146665,$          & \multirow{2}{*}{1580} \\
                    & &                     & & \multicolumn{1}{r}{$880354255\vphi-1424443332]$}   &                       \\\bottomrule
\end{tabular}
\end{table}
\end{center}

\newcommand{\etalchar}[1]{$^{#1}$}
\providecommand{\bysame}{\leavevmode\hbox to3em{\hrulefill}\thinspace}
\providecommand{\MR}{\relax\ifhmode\unskip\space\fi MR }
% \MRhref is called by the amsart/book/proc definition of \MR.
\providecommand{\MRhref}[2]{%
  \href{http://www.ams.org/mathscinet-getitem?mr=#1}{#2}
}
\providecommand{\href}[2]{#2}


\begin{thebibliography}{BDKM{\etalchar{+}}12}

\bibitem[BCDT01]{breuil-conrad-diamond-taylor}
C.~Breuil, B.~Conrad, F.~Diamond, and R.~Taylor, \emph{On the modularity of
  elliptic curves over {$\mathbf{Q}$}: wild 3-adic exercises}, J. Amer. Math.
  Soc. \textbf{14} (2001), no.~4, 843--939 (electronic),
  \url{http://math.stanford.edu/~conrad/papers/tswfinal.pdf}. \MR{2002d:11058}

\bibitem[BCP97]{magma}
W.~Bosma, J.~Cannon, and C.~Playoust, \emph{The {M}agma algebra system. {I}.
  {T}he user language}, J. Symbolic Comput. \textbf{24} (1997), no.~3--4,
  235--265, Computational algebra and number theory (London, 1993). \MR{1 484
  478}

\bibitem[BDG04]{bertolini-darmon-green}
Massimo Bertolini, Henri Darmon, and Peter Green, \emph{Periods and points
  attached to quadratic algebras}, Heegner points and {R}ankin {$L$}-series,
  Math. Sci. Res. Inst. Publ., vol.~49, Cambridge Univ. Press, Cambridge, 2004,
  pp.~323--367. \MR{2083218 (2005e:11062)}

\bibitem[BDKM{\etalchar{+}}12]{sqrt5data}
Jon Bober, Alyson Deines, Ariah Klages-Mundt, Ben LeVeque, R.~Andrew Ohana,
  Ashwath Rabindranath, Paul Sharaba, and William Stein, \emph{A {D}atabase of
  {E}lliptic {C}urves over {$\Q(\sqrt{5})$}}, 2012,
  \url{http://wstein.org/papers/sqrt5}.

\bibitem[Bil11]{billerey:isog}
Nicolas Billerey, \emph{Crit\`eres d'irr\'eductibilit\'e pour les
  repr\'esentations des courbes elliptiques}, Int. J. Number Theory \textbf{7}
  (2011), no.~4, 1001--1032. \MR{2812649}

\bibitem[BK75]{antwerpiv}
B.\thinspace{}J. Birch and W.~Kuyk (eds.), \emph{Modular functions of one
  variable. {I}{V}}, Springer-Verlag, Berlin, 1975, Lecture Notes in
  Mathematics, Vol. 476.

\bibitem[BMSW07]{bmsw:bulletins}
Baur Bektemirov, Barry Mazur, William Stein, and Mark Watkins, \emph{Average
  ranks of elliptic curves: tension between data and conjecture}, Bull. Amer.
  Math. Soc. (N.S.) \textbf{44} (2007), no.~2, 233--254 (electronic).
  \MR{2291676}

\bibitem[BS10]{boylan-skoruppa:hecke-gauss-sums}
Hatice Boylan and Nils-Peter Skoruppa, \emph{Explicit formulas for {H}ecke
  {G}auss sums in quadratic number fields}, Abh. Math. Semin. Univ. Hambg.
  \textbf{80} (2010), no.~2, 213--226. \MR{2734687 (2012c:11163)}

\bibitem[CL07]{cremona-lingham}
J.\thinspace{}E. Cremona and M.\thinspace{}P. Lingham, \emph{Finding all
  elliptic curves with good reduction outside a given set of primes},
  Experiment. Math. \textbf{16} (2007), no.~3, 303--312. \MR{2367320
  (2008k:11057)}

\bibitem[Cre]{cremona:onlinetables}
J.\thinspace{}E. Cremona, \emph{{E}lliptic {C}urves {D}ata},
  \url{http://www.warwick.ac.uk/~masgaj/ftp/data/}.

\bibitem[Cre92]{cremona:abvar}
\bysame, \emph{Abelian varieties with extra twist, cusp forms, and elliptic
  curves over imaginary quadratic fields}, J. London Math. Soc. (2) \textbf{45}
  (1992), no.~3, 404--416. \MR{1180252 (93h:11056)}

\bibitem[Cre97]{cremona:algs}
\bysame, \emph{Algorithms for modular elliptic curves}, second ed., Cambridge
  University Press, Cambridge, 1997,
  \url{http://www.warwick.ac.uk/~masgaj/book/fulltext/}.

\bibitem[Dem05]{dembele:hilbert5}
Lassina Demb{\'e}l{\'e}, \emph{Explicit computations of {H}ilbert modular forms
  on {${\Bbb Q}(\sqrt{5})$}}, Experiment. Math. \textbf{14} (2005), no.~4,
  457--466. \MR{2193808}

\bibitem[Dem08]{dembele:elliptic-curves-quadratic-fields}
\bysame, \emph{An algorithm for modular elliptic curves over real quadratic
  fields}, Experiment. Math. \textbf{17} (2008), no.~4, 427--438. \MR{2484426
  (2010a:11119)}

\bibitem[Dok04]{dokchitser:lfun}
Tim Dokchitser, \emph{Computing special values of motivic {$L$}-functions},
  Experiment. Math. \textbf{13} (2004), no.~2, 137--149,
  \url{http://arxiv.org/abs/math/0207280}. \MR{2068888 (2005f:11128)}

\bibitem[Fis12]{fisher:families_cong}
Tom Fisher, \emph{On {F}amilies of $n$-congruent {E}lliptic {C}urves}, Preprint
  (2012).

\bibitem[GV11]{greenberg-voight:shimura}
Matthew Greenberg and John Voight, \emph{Computing systems of {H}ecke
  eigenvalues associated to {H}ilbert modular forms}, Math. Comp. \textbf{80}
  (2011), no.~274, 1071--1092,
  \url{http://www.cems.uvm.edu/~voight/articles/heckefun-021910.pdf}.
  \MR{2772112 (2012c:11103)}

\bibitem[GZ86]{gross-zagier}
B.~Gross and D.~Zagier, \emph{Heegner points and derivatives of
  \protect{${L}$}-series}, Invent. Math. \textbf{84} (1986), no.~2, 225--320,
  \url{http://wstein.org/papers/bib/Gross-Zagier_Heegner_points_and_derivatives_of_Lseries.pdf}.
  \MR{87j:11057}

\bibitem[HK03]{halb_kraus:XE7}
Emmanuel Halberstadt and Alain Kraus, \emph{Sur la courbe modulaire
  {$X_E(7)$}}, Experiment. Math. \textbf{12} (2003), no.~1, 27--40. \MR{2002672
  (2004m:11090)}

\bibitem[Kat81]{katz:torsion}
N.\thinspace{}M. Katz, \emph{Galois properties of torsion points on abelian
  varieties}, Invent. Math. \textbf{62} (1981), no.~3, 481--502. \MR{82d:14025}

\bibitem[KN12]{kamienny-najman}
Sheldon Kamienny and Filip Najman, \emph{Torsion groups of elliptic curves over
  quadratic fields}, Acta. Arith. \textbf{152} (2012), 291--305.

\bibitem[Kol91]{kolyvagin:mordellweil}
V.\thinspace{}A. Kolyvagin, \emph{On the {M}ordell-{W}eil group and the
  {S}hafarevich-{T}ate group of modular elliptic curves}, Proceedings of the
  International Congress of Mathematicians, Vol.\ I, II (Kyoto, 1990) (Tokyo),
  Math. Soc. Japan, 1991, pp.~429--436. \MR{93c:11046}

\bibitem[Kub76]{kubert:torfam}
Daniel~Sion Kubert, \emph{Universal bounds on the torsion of elliptic curves},
  Proceedings of the London Mathematical Society \textbf{s3-33} (1976), no.~2,
  193--237.

\bibitem[LV]{larson-vaintrob}
E.\ Larson and D.\ Vaintrob, \emph{Determinants of subquotients of {G}alois
  representations associated to abelian varieties}, arXiv:1110.0255.

\bibitem[Maz78]{mazur:rational}
B.~Mazur, \emph{Rational isogenies of prime degree (with an appendix by {D}.
  {G}oldfeld)}, Invent. Math. \textbf{44} (1978), no.~2, 129--162.

\bibitem[Oda82]{oda:periods}
Takayuki Oda, \emph{Periods of {H}ilbert modular surfaces}, Progress in
  Mathematics, vol.~19, Birkh\"auser Boston, Mass., 1982. \MR{670069
  (83k:10057)}

\bibitem[Rub11]{lcalc}
M.\thinspace{}O. Rubinstein, \emph{Lcalc}, 2011,
  \url{http://oto.math.uwaterloo.ca/~mrubinst/l_function_public/CODE/}.

\bibitem[S{\etalchar{+}}12]{sage}
W.\thinspace{}A. Stein et~al., \emph{{S}age {M}athematics {S}oftware ({V}ersion
  4.8)}, The Sage Development Team, 2012, \url{http://www.sagemath.org}.

\bibitem[Sil92]{silverman:aec}
J.\thinspace{}H. Silverman, \emph{The arithmetic of elliptic curves},
  Springer-Verlag, New York, 1992, Corrected reprint of the 1986 original.

\bibitem[SW02]{stein-watkins:ants5}
William Stein and Mark Watkins, \emph{A database of elliptic curves---first
  report}, Algorithmic number theory (Sydney, 2002), Lecture Notes in Comput.
  Sci., vol. 2369, Springer, Berlin, 2002, \url{http://wstein.org/ecdb},
  pp.~267--275. \MR{2041090 (2005h:11113)}

\bibitem[V{\'e}l71]{velu:isogenies}
Jacques V{\'e}lu, \emph{Isog\'enies entre courbes elliptiques}, C. R. Acad.
  Sci. Paris S\'er. A-B \textbf{273} (1971), A238--A241.

\bibitem[Wil95]{wiles:fermat}
A.\thinspace{}J. Wiles, \emph{Modular elliptic curves and \protect{F}ermat's
  last theorem}, Ann. of Math. (2) \textbf{141} (1995), no.~3, 443--551,
  \url{http://users.tpg.com.au/nanahcub/flt.pdf}.

\bibitem[Zha01]{zhang:heightsshimura}
Shou-Wu Zhang, \emph{Heights of {H}eegner points on {S}himura curves}, Ann. of
  Math. (2) \textbf{153} (2001), no.~1, 27--147. \MR{1826411 (2002g:11081)}

\end{thebibliography}
\end{document}